\newtheorem{theorem}{Theorem}[section]
\newtheorem{lemma}[theorem]{Lemma}
\theoremstyle{definition}
\newtheorem{definition}[theorem]{Definition}
\newtheorem{remark}[theorem]{Remark}
\newcommand{\norm}[1]{\left\Vert#1\right\Vert}
\numberwithin{equation}{section}
\begin{document}
\font\nho=cmr10
\def\dive{\mathrm{div}}
\def\cal{\mathcal}
\def\L{\cal L}

\def \ud{\underline }
\def\id{{\indent }}
\def\f{\frac}
\def\non{{\noindent}}
 \def\le{\leqslant} 
 \def\leq{\leqslant}
 \def\geq{\geqslant} 
\def\rar{\rightarrow}
\def\Rar{\Rightarrow}
\def\ti{\times}
\def\i{\mathbb I}
\def\j{\mathbb J}
\def\si{\sigma}
\def\Ga{\Gamma}
\def\ga{\gamma}
\def\ld{{\lambda}}
\def\Si{\Psi}
\def\f{\mathbf F}
\def\r{\hro{R}}
\def\e{\cal{E}}
\def\B{\cal B}
\def\A{\mathcal{A}}
\def\p{\mathbb P}

\def\tet{\theta}
\def\Tet{\Theta}
\def\hro{\mathbb}
\def\ho{\mathcal}
\def\P{\ho P}
\def\E{\mathcal{E}}
\def\n{\mathbb{N}}
\def\M{\mathbb{M}}
\def\dMu{\mathbf{U}}
\def\dMcs{\mathbf{C}}
\def\dMcu{\mathbf{C^u}}
\def\vk{\vskip 0.2cm}
\def\td{\Leftrightarrow}
\def\df{\frac}
\def\Wei{\mathrm{We}}
\def\Rey{\mathrm{Re}}
\def\s{\mathbb S}
\def\l{\mathcal{L}}
\def\C+{C_+([t_0,\infty))}
\def\o{\cal O}

%\def\@frontmatterwidth{16cm}
%\textheight=22cm
%\oddsidemargin=0cm
%\evensidemargin=0cm
%\begin{frontmatter}
\title[PAP-mild Solutions of Keller-Segel systems]{On pseudo almost Periodic Solutions of the parabolic-elliptic Keller-Segel systems}

\author[N.T. Van]{Nguyen Thi Van}\address{Nguyen Thi Van\hfill\break
Faculty of Computer Science and Engineering, Thuyloi University,  \hfill\break
175 Tay Son, Dong Da, Hanoi, Viet Nam}
\email{van@tlu.edu.vn}

\author[T.M. Nguyet]{Tran Minh Nguyet}
\address{Tran Minh Nguyet \hfill\break Department of Mathematics, Thang Long University, Nghiem Xuan Yem, Hanoi, Vietnam}
\email{nguyettm@thanglong.edu.vn}

\author[N.T. Loan]{Nguyen Thi Loan}
\address{Nguyen Thi Loan\hfill\break Faculty of Fundamental Sciences, Phenikaa University, Hanoi 12116, Vietnam.}
\email{loan.nguyenthi2@phenikaa-uni.edu.vn}

\author[P.T. Xuan]{Pham Truong Xuan}
\address{Pham Truong Xuan \hfill\break Thang Long Institute of Mathematics and Applied Sciences (TIMAS), Thang Long University, Nghiem Xuan Yem, Hanoi, Vietnam} 
\email{phamtruongxuan.k5@gmail.com or xuanpt@thanglong.edu.vn}

\begin{abstract}  
In this paper we investigate the existence, uniqueness and exponential stability of pseudo almost periodic (PAP-) mild solutions of the parabolic-elliptic (P-E) Keller-Segel system on a bounded domain $\Omega\in \mathbb{R}^n$ with smooth boundary. First, the well-posedness of the corresponding linear system is established by using the smoothing estimates of the Neumann heat semigroup on $\Omega$. Then, the existence of PAP-mild solution of linear system is done by proving a Massera-type principle. Next, we obtain the well-posedness of such solutions for semilinear system by using the results of linear system and fixed point arguments. The exponential stability is proven by using again the estimates of the Neumann heat semigroup. Finally, we discuss also such results for the case of the Keller-Segel system on the framework of real hyperbolic manifolds.
\end{abstract}

\subjclass[2020]{34K14, 35A01, 35B65, 92C17}

\keywords{Chemotaxis, Keller–Segel model, Dispersive estimates, Heat Neumann semigroup, Smoothing estimates, Pseudo almost periodic functions (solutions), Well-posedness, Global stability}

\maketitle

\tableofcontents

\section{Introduction}
In the present paper we consider the parabolic-elliptic (P-E) Keller–Segel system on a bounded domain with smooth boundary $\Omega \subset \mathbb{R}^n \,\, (n \geqslant 2)$ described by the following equations
\begin{equation}\label{KSH0} 
\left\{
  \begin{array}{rll}
u_t \!\! &= \Delta u - \chi \nabla \cdot (u\nabla v) + g(t,x) \quad  & (t,x)\in \r\times \Omega, \hfill \cr
-\Delta v +  \gamma v \!\!&= \kappa u \quad & (t,x) \in \r\times \Omega,\cr
\nabla u \cdot \nu \!\!&= \nabla v\cdot \nu =0 \quad & (t,x) \in \r\times \partial\Omega,
\end{array}\right.
\end{equation}
where $\nu$ is the normal outer vector
on $\partial\Omega$, the operator $\Delta$ means Laplace operator on $\mathbb{R}^n$ and $g:\mathbb{R}\times \Omega \to \mathbb{R}_+$ is a given function. The unknown functions $u(t,x): \mathbb{R}\times \Omega \to \mathbb{R}_+$ represents the density of cells and $v(t,x): \mathbb{R}\times \Omega \to \mathbb{R}_+$ is the concentration of the chemoattractant. The parameter $\chi$ is the sensitivity parameter, which is a positive constant. The parameters $\gamma \geqslant 0$ and $\kappa >0$ denote the decay and production rate of the chemoattractant, respectively.

Concerning the Keller-Segel (P-E) system on $\mathbb{R}^2$, the authors in \cite{Bla} proved that there exists a threshold value for the initial mass $M = \int u_0dx$ that relates to the existence and blow up of solutions: if {$M<8\pi/\kappa\chi$}, then solutions exist globally  and if {$M>8\pi/\kappa\chi$}, then solutions blow up in a finite time. For a bounded domain with smooth boundary in $\mathbb{R}^2$, Li and Wang established the finite-time blow-up and boundedness for system \eqref{KS} in \cite{Li2023}. On the other hand for the hyperbolic space $\mathbb{H}^2$, Pierfelice and Maheux obtained the local and global well-posedness results under the sub-critical condition and a blow-up result in \cite{MaPi}.

We recall briefly some well-posedness results of system \eqref{KS} on Euclidean space $\mathbb{R}^n$ (where $n \geq 3)$. In \cite{Ko1}, Kozono et al. proved the existence of weak solutions $u\in C_b(\r_+,\, L^{n/2}(\mathbb{R}^n)) \cap L^q(\r_+,\, L^p(\mathbb{R}^n))$ with the initial data $u_0 \in L^{n/2}(\mathbb{R}^n)$ small enough, where $n\geq 3$ and $p,q$ are chosen suitable. After that, Kozono and Sugiyama \cite{Ko2} proved well-posedness of mild solutions for \eqref{KS} in weak-Lorentz spacces. Precisely, for small enough initial data $u_0\in L^{n/2, \infty}(\mathbb{R}^n)\, (n \geq 3)$, they proved that system \eqref{KS} has a unique mild solution $u\in C_b(\r_+, L^{n/2,\infty}(\mathbb{R}^n))$ satisfying that $t^{1-\frac{n}{2q}} u \in C_b(\r_+,\, L^q(\r^n))$, where $n/2<q<n$. For the uniqueness in a large space of initial data, Ferreira \cite{Fe2021} established the unconditional uniqueness of mild solutions for system \eqref{KS} in $L^{p,\infty}(\mathbb{R}^n)$ spaces for {$p=n/2$}. {In addition}, for other critical spaces we can refer some works for existence and uniqueness of mild solutions of (parabolic-parabolic or parabolic-eliptic) Keller-Segel systems in \cite{Iwa2011,Fe2011,Chen2018}. The existence, uniqueness and stability of periodic and almost periodic mild solutions for Keller-Segel systems on the whole space $\mathbb{R}^n$ (where $n \geq 4$) have been established by Xuan et al. in \cite{Xu,XL2024}.

Now we present some related works on the parabolic-parabolic Keller-Segel and some other systems consisting Keller-Segel equations on bounded domain (with smooth boundary) of $\mathbb{R}^n$. The work of Winkler \cite{Winker} provided the dispersive and smoothing estimates for the Neumann heat semigroup, then employ these estimates to prove the stability of solutions for the case $n \geq 3$. 
Then, Winkler studied finite-time blow-up in the parabolic-parabolic Keller–Segel system
in the higher-dimensional case in \cite{Win2}. In \cite{Cao}, Cao extended the previous estimates obtained in \cite{Winker} to study the smallness condition on the initial data in optimal Lebesgue spaces which ensure global boundedness and large time convergence for the case $n\geq 2$. After that, Hao et al. \cite{Hao} provided  the global classical solutions to the Keller–Segel–Navier–Stokes system with matrix-valued sensitivity. In addition, Jiang proven the global Stability of Keller–Segel Systems in Critical Lebesgue
Spaces in \cite{Jiang2018}, then considered the global stability of homogeneous steady states in scaling invariant spaces for a Keller–Segel–Navier–Stokes system in \cite{Jiang2020}. We refers some useful works \cite{Win1,Win3}. In our knowledge, there is no work which studies the well-posedness of pseudo almost periodic mild solutions for the (P-E) Keller-Segel system \eqref{KSH0} on the bounded domains of $\mathbb{R}^n$, meanwhile such solution and its generalisations were been studied extensively for other parabolic and hyperbolic equations (see for example \cite{Dia} and references therein).

In the present paper, we study the existence, uniquness and exponential stability of pseudo almost periodic (PAP-) mild solutions for Keller-Segel system \eqref{KSH0}. We describle the strategy as follows: first, we employ the smoothing estimates for Neumann heat semigroup provided in \cite{Winker,Cao} to prove the well-posedness of mild solutions for the linear systems corresponding to \eqref{KSH0} (see Theorem \ref{Thm:linear} $(i)$). Base on this well-posedness we define the solution operator associating with the linear system. Then, we prove the well-posedness of PAP-mild solutions for linear systems by proving that the solution operator preserves the pseudo almost periodicity of given functions (see Theorem \ref{Thm:linear} $(ii)$). By using the well-posedness of linear systems and fixed point arguments we obtain the well-posedness of PAP-mild solutions for Keller-Segel system. The exponential stability of such solutions are also proven by using again the smoothing estimates of Neumann heat semigroup (see Theorem \ref{thm2.20}).

Finally, we will discuss about the case of Keller-Segel (P-E) system on whole hyperbolic spaces. Namely, we do not restrict the study on a bounded domain as previous sections. On the framework of hyperbolic spaces, the periodic mild solutions for the (P-E) Keller-Segel system have been treated detailed in  \cite{Xu}.
We refer some related works on the well-posedness of asymptotically almost periodic mild solutions for Navier-Stokes equations in \cite{XVQ2023,XV2023}.  In fact, we will get the same results such as the case of bounded domain in $\mathbb{R}^n$ since the scalar heat semigroup on $\mathbb{H}^n$ is also exponential stable (see Lemma \ref{dispersive} below). Using this fact, we can establish the well-posedness and exponential stability of pseudo almost periodic mild solutions for the (P-E) Keller-Segel system by the same way as in the case of bounded domains of Euclidean spaces (see Theorem \ref{ThecaseHn}).
The results obtained in this paper provides a similarity comparison of well-posedness for Keller-Segel systems on bounded domains of $\mathbb{R}^n$ and on the whole hyperbolic space $\mathbb{H}^n$. 

Our paper is organized as follows: Section \ref{2_3} relies on the (P-E) Keller-Segel systems , some useful estimates of the Neumann heat semigroup and concepts of generalized functions. In Section \ref{S3}, we provide the well-posedness of PAP-mild solutions for corresponding linear systems. In Section \ref{S4} we establish the well-posedness and exponential stablity for the (P-E) Keller-Segel system. We treat the case of systems on the hyperbolic spaces in Section \ref{S5}.
%\noindent
%{\bf Acknowledgements.} 

\section{The Keller-Segel (P-E) systems and concepts of functions} \label{2_3}
For simplicity, we consider that $\chi=\kappa=1$ and $g(t)= \dive f(t)$. The Keller-Segel (P-E) system \eqref{KSH0} on the bounded domain with smooth boundary $\Omega\subset \mathbb{R}^n$ (where $n \geqslant 2$) becomes 
\begin{equation}\label{KS} 
\left\{
  \begin{array}{rll}
u_t \!\! &= \Delta u - \nabla \cdot (u\nabla v) + \dive f(t) \quad  & (t,x)\in \r\times \Omega, \hfill \cr
-\Delta v  + \gamma v \!\!&= u \quad & (t,x) \in \r\times \Omega,\cr
\nabla u \cdot \nu \!\!&= \nabla v\cdot \nu =0 \quad & (t,x) \in \r\times \partial\Omega.
\end{array}\right.
\end{equation}

The second equation of system \eqref{KS} leads to $v= (-\Delta+ \gamma I)^{-1}u$. Therefore, according to Duhamel’s principle, we can define the mild solution of system \eqref{KS} on the whole line time-axis as a bounded solution of the following integral equation (see \cite{KoNa} for the same definition for Navier-Stokes equations):
\begin{equation}\label{00inte}
u(t) = \int_{-\infty}^t \nabla \cdot e^{(t-s)\Delta}\left[ - u\nabla(-\Delta + \gamma I)^{-1}u + f \right](s)ds.
\end{equation}

We recall the dispersive and smoothing estimates of Neumann heat semigroup on the bounded domain with smooth boundary $\Omega$.
\begin{lemma}\label{Heatestimates}
Suppose $(e^{t\Delta })_{t>0}$ is the Neumann heat semigroup in $\Omega$, and let $\lambda_1 > 0$ denote the first nonzero eigenvalue of $ - \Delta$ 
in $\Omega$ under Neumann boundary conditions. Then there exist positive constants $k_1,k_2,k_3, k_4$ which only depend on $\Omega$ and we have the following estimates

\begin{itemize}
\item[$(i)$] If $1 \leq q \leq p\leq \infty$, then 
\begin{equation}\label{dispersive1}
\left\| e^{t {\Delta}}\omega\right\|_{L^p(\Omega)} \leq k_1(1+t^{- \frac{n}{2}(\frac{1}{q}-\frac{1}{p})}) e^{-\lambda_1t}\left\|\omega\right\|_{L^q(\Omega)} \text{for all }  t>0
\end{equation}
 holds for all $\omega \in L^q(\Omega)$ with $\int_{\Omega}\omega dx = 0$.

\item[$(ii)$] If $1 \leq q \leq p\leq \infty$, then 
\begin{equation}\label{dispersive2}
\left\| \nabla e^{t {\Delta}}\omega\right\|_{L^p(\Omega)} \leq k_2(1+t^{-\frac{1}{2}- \frac{n}{2}(\frac{1}{q}-\frac{1}{p})}) e^{-\lambda_1t}\left\|\omega\right\|_{L^q(\Omega)} \text{for all }  t>0
\end{equation}
 holds for all $\omega \in L^q(\Omega)$.

%\item[$(iii)$] If $2 \leq q \leq p < \infty$, then 
%\begin{equation}\label{dispersive3}
%\left\| \nabla e^{t {\Delta}}\omega\right\|_{L^p(\Omega)} \leq k_3(1+t^{- \frac{n}{2}(\frac{1}{q}-\frac{1}{p})}) e^{-\lambda_1t}\left\|\nabla\omega\right\|_{L^q(\Omega)} \text{for all }  t>0
%\end{equation}
% is true for all $\omega \in W^{1,p}(\Omega)$.

%\item[$(iv)$] If $1 < q \leq p\leq \infty$, then 
%\begin{equation}\label{dispersive4}
%\left\| e^{t {\Delta}} \nabla \cdot \omega\right\|_{L^p(\Omega)} \leq k_4(1+t^{-\frac{1}{2}- \frac{n}{2}(\frac{1}{q}-\frac{1}{p})}) e^{-\lambda_1t}\left\|\nabla\omega\right\|_{L^q(\Omega)} \text{for all }  t>0
%\end{equation}
% is valid for any $\omega \in (W^{1,p}(\Omega))^n$.
\end{itemize}
\end{lemma}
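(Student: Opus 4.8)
The plan is to estimate separately in the short-time range $0<t\le 2$, where the smoothing is governed by the local Gaussian behaviour of the Neumann heat kernel, and in the long-time range $t\ge 2$, where the factor $e^{-\lambda_1 t}$ comes from the spectral gap of the Neumann Laplacian combined with the semigroup property. A preliminary observation reduces part $(ii)$ to the mean-zero case: since $e^{t\Delta}$ conserves mass and $\nabla$ annihilates constants, one has $\nabla e^{t\Delta}\omega=\nabla e^{t\Delta}(\omega-\bar\omega)$ with $\bar\omega:=|\Omega|^{-1}\int_\Omega\omega\,dx$, and H\"older's inequality gives $\|\omega-\bar\omega\|_{L^q(\Omega)}\le 2\|\omega\|_{L^q(\Omega)}$; hence it suffices to prove $(ii)$ for $\omega$ with $\int_\Omega\omega\,dx=0$, which is precisely the setting of $(i)$ and makes the spectral decay available there as well.

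For the short-time range the plan is to invoke the classical Gaussian upper bounds for the Neumann heat kernel $G(x,y,t)$ of $e^{t\Delta}$ on a bounded domain with smooth boundary (see \cite{Winker,Cao} and the references therein),
\[
0\le G(x,y,t)\le C\,t^{-n/2}e^{-c|x-y|^2/t},\qquad
|\nabla_x G(x,y,t)|\le C\,t^{-(n+1)/2}e^{-c|x-y|^2/t}\qquad(0<t\le 2),
\]
with $C,c>0$ depending only on $\Omega$. Writing $e^{t\Delta}\omega(x)=\int_\Omega G(x,y,t)\,\omega(y)\,dy$ and estimating the integral operator by interpolation from the $L^1\to L^1$, $L^\infty\to L^\infty$ and $L^1\to L^\infty$ bounds supplied by the kernel (a Young/Riesz--Thorin argument), one obtains $\|e^{t\Delta}\omega\|_{L^p(\Omega)}\le C\,t^{-\frac n2(\frac1q-\frac1p)}\|\omega\|_{L^q(\Omega)}$ and, using $\nabla_x G$ in place of $G$, $\|\nabla e^{t\Delta}\omega\|_{L^p(\Omega)}\le C\,t^{-\frac12-\frac n2(\frac1q-\frac1p)}\|\omega\|_{L^q(\Omega)}$ for $0<t\le 2$. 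Since $e^{-\lambda_1 t}\ge e^{-2\lambda_1}$ on this interval and $t^{-\beta}\le 1+t^{-\beta}$, these are dominated by the right-hand sides of \eqref{dispersive1} and \eqref{dispersive2}.

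For the long-time range the plan is to use that the Neumann Laplacian is nonnegative and self-adjoint on $L^2(\Omega)$ with discrete spectrum $0=\lambda_0<\lambda_1\le\lambda_2\le\cdots$, the eigenvalue $0$ being simple with constant eigenfunction; thus the spectral theorem yields $\|e^{t\Delta}\omega\|_{L^2(\Omega)}\le e^{-\lambda_1 t}\|\omega\|_{L^2(\Omega)}$ for every $\omega\in L^2(\Omega)$ with $\int_\Omega\omega\,dx=0$. Then, for $t\ge 2$, I would factor $e^{t\Delta}=e^{\Delta}\circ e^{(t-2)\Delta}\circ e^{\Delta}$ and chain three bounded maps: $e^{\Delta}\colon L^q(\Omega)\to L^2(\Omega)$ (bounded kernel $G(\cdot,\cdot,1)$ and $\Omega$ of finite measure); the middle factor $e^{(t-2)\Delta}\colon L^2(\Omega)\to L^2(\Omega)$ of operator norm $\le e^{-\lambda_1(t-2)}$ on mean-zero functions; and $e^{\Delta}$, respectively $\nabla e^{\Delta}$, mapping $L^2(\Omega)$ boundedly into $L^p(\Omega)$; here one uses that $e^{\Delta}$ preserves the mean-zero condition so that the middle estimate applies. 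This gives $\|e^{t\Delta}\omega\|_{L^p(\Omega)}\le C\,e^{-\lambda_1 t}\|\omega\|_{L^q(\Omega)}$ and the analogous bound for $\nabla e^{t\Delta}\omega$, for all $t\ge 2$, with $C$ depending only on $\Omega$. Combining the two ranges, using $e^{-\lambda_1 t}\le(1+t^{-\beta})e^{-\lambda_1 t}$ for $\beta\ge 0$, and taking the constants $k_1,k_2$ (and similarly $k_3,k_4$ for the remaining parts) to be the largest of those produced above completes the proof.

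The one non-routine ingredient is the Gaussian upper bound for $G$ and, above all, for its spatial gradient $\nabla_x G$ under Neumann conditions on a bounded smooth domain. This is classical (via Davies' heat-kernel method, or via local parabolic regularity up to the boundary together with the maximum principle), but it is the step I would quote with most care, since it is precisely where the dependence of the constants on $\Omega$ is generated; everything else reduces to the spectral gap together with careful bookkeeping of the exponents through the semigroup law.
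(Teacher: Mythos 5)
Your argument is essentially correct, but note that the paper does not prove this lemma at all: its ``proof'' is a citation to \cite[Lemma 2.1]{Cao} and \cite[Lemma 1.3]{Winker}, so what you have written is a reconstruction of the cited results rather than a parallel of anything in the text. Your skeleton (short-time smoothing for $0<t\le 2$, spectral gap on the mean-zero subspace plus the semigroup factorization $e^{t\Delta}=e^{\Delta}e^{(t-2)\Delta}e^{\Delta}$ for $t\ge 2$, and the reduction of $(ii)$ to mean-zero data via $\nabla e^{t\Delta}\omega=\nabla e^{t\Delta}(\omega-\bar\omega)$ with $\|\omega-\bar\omega\|_{L^q}\le 2\|\omega\|_{L^q}$) is the same skeleton as Winkler's, and all the routine steps check out: mass conservation under Neumann conditions justifies both the reduction and the fact that $e^{\Delta}$ preserves the mean-zero class, and the Young/Riesz--Thorin bookkeeping gives exactly the exponents $-\frac n2(\frac1q-\frac1p)$ and $-\frac12-\frac n2(\frac1q-\frac1p)$. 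The one genuine difference is how the short-time gradient smoothing is obtained: you derive it from a pointwise Gaussian bound on $\nabla_x G(x,y,t)$ for the Neumann kernel, which is true on a bounded smooth domain but is the heaviest ingredient you quote, whereas Winkler avoids kernel gradient bounds altogether, instead combining the $L^\infty$ estimate $\|\nabla e^{t\Delta}w\|_{L^\infty}\le C(1+t^{-1/2})\|w\|_{L^\infty}$ (from analyticity of the semigroup on $W^{1,p}$-type spaces), $L^p$--$L^q$ smoothing of $e^{t\Delta}$ itself, duality and the semigroup law to interpolate the full range $1\le q\le p\le\infty$. Your route buys a more self-contained and conceptually uniform proof at the price of invoking boundary-regular Gaussian gradient estimates; the cited route stays within soft semigroup theory. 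If you keep your version, state precisely the kernel bound you use and a reference for it, since that is where all dependence of $k_1,k_2$ on $\Omega$ enters; as written, that step is asserted rather than proved.
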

\begin{proof}
The proof was given in \cite[Lemma 2.1]{Cao} and \cite[Lemma 1.3]{Winker}.
\end{proof}

Now we recall some concepts of generalized functions. For more details we refer the readers to books \cite{Dia} and references therein.
Let $X$ be a Banach space, we denote 
$$C_b(\mathbb{R}, X):=\{f:\mathbb{R} \to X \mid f\hbox{ is continuous on $\mathbb{R}$  and }\sup_{t\in\mathbb{R}}\|f(t)\|_X<\infty\}$$
which is a Banach space endowed with the norm $\|f\|_{\infty, X}=\|f\|_{C_b(\mathbb{R}, X)}:=\sup\limits_{t\in\mathbb{R}}\|f(t)\|_X$.

\begin{definition}(AP-function)
A function  $h \in C_b(\mathbb{R}, X )$ is called almost periodic function if for each $ \epsilon  > 0$, there exists $l_{\epsilon}>0 $ such that every interval of length $l_{\epsilon}$ contains at least a number $T $ with the following property
\begin{equation*}
 \sup_{t \in \mathbb{R} } \| h(t+T)  - h(t) \| < \epsilon.
\end{equation*}
The collection of all almost periodic functions $h:\mathbb{R} \to X $ will be denoted by $AP(\mathbb{R},X)$ which is a Banach space endowed with the norm $\|h\|_{ AP(\mathbb{R},X)}=\sup\limits_{t\in\mathbb{R}}\|h(t)\|_X.$
\end{definition}

\begin{definition}\label{WPAAfunction}(PAP-function)
A function $f \in C_b(\mathbb{R},X)$ is called pseudo almost periodic if it can be decomposed as $f = g + \phi$ where $g \in AP(\mathbb{R},X)$ and $\phi$ is a bounded continuous function with vanishing mean value i.e
\begin{equation*}\label{meanvalue}
\lim_{L\to \infty}\frac{1}{2L}\int_{-L}^L \|\phi(t)\|_X dt =0.
\end{equation*}
We denote the set of all functions with vanishing mean value by $PAP_0(\mathbb{R},X)$
and the set of all the pseudo almost periodic (PAP-) functions by $PAP(\mathbb{R},X)$. 
\end{definition}
We have that $(PAP(\mathbb{R},X),\|.\|_{\infty,X})$ is a Banach space, where $\|.\|_{\infty,X}$ is the supremum norm (see \cite[Theorem 5.9]{Dia}). As well as AAP- functional space, we have the following decomposition (see also \cite{Dia}): 
$$PAP(\mathbb{R},X) = AP(\mathbb{R},X) \oplus PAP_0(\mathbb{R},X).$$

The notion of pseudo almost periodic function is a generalisation of the periodic and almost periodic  functions. Precisely, we have the following inclusions
$$P(\mathbb{R},X) \hookrightarrow AP(\mathbb{R},X) \hookrightarrow PAP(\mathbb{R},X) \hookrightarrow C_b(\mathbb{R},X).$$
where $P(\mathbb{R},X)$ is the space of all  continuous and periodic functions from $\mathbb{R}$  to $X$.\\
{\bf Example.}
The function $h(t)=\sin{ t}+\sin({\sqrt{2}t})$ is almost periodic but not periodic,  $\tilde{h}(t) =\sin{ t}+\sin({\sqrt{2}t})+ e^{-|t|}$  is pseudo almost periodic but not almost periodic. Moreover, let $X$ be a Banach space and $g\in X - \left\{ 0\right\}$, we have that $f=hg\in AP(\mathbb{R},X)$ and $\tilde{f}=\tilde{h}g \in PAP(\mathbb{R},X)$.

\section{Linear systems: well-posedness of PAP-mild solutions}\label{S3}
In this section, we concentrate on studying the inhomogeneous linear system corresponding to system \eqref{KS}. 
\begin{equation}\label{KSlinear} 
\left\{
  \begin{array}{rll}
u_t \!\! &= \Delta u - \nabla \cdot (\omega\nabla (-\Delta+ \gamma I)^{-1})\omega + \dive f(t) \quad  & (t,x)\in \r\times \Omega, \hfill \cr
\nabla u \cdot \nu \!\!&=0 \quad & (t,x) \in \r\times \partial\Omega.
\end{array}\right.
\end{equation}
for a given $\omega$.
By Duhamel’s principle, we can define the mild solution of system \eqref{KSlinear} as a bounded solution of the following integral equation
\begin{equation}\label{00inteLinear}
u(t) = \int_{-\infty}^t \nabla \cdot e^{(t-s)\Delta}\left[ - \omega\nabla(-\Delta+ \gamma I)^{-1}\omega + f \right](s)ds.
\end{equation}
Setting $L_j = \partial_j(- \Delta + \gamma I)^{-1}$, the properties of this operator is given in the following lemma (see \cite[Lemma 4.1]{Fe2021}):
\begin{lemma}\label{invertEs}
Let $\Omega \subset \mathbb{R}^n$, $\gamma\geqslant 0$, $n\geqslant 2$, $1<p<n$ and $\frac{1}{q}=\frac{1}{p}-\frac{1}{n}$.
The operator $L_j$ is continuous from $L^{p}(\Omega)$ to $L^{q}(\Omega)$, for each $j=1,2...n$. Moreovver, there exists a constant $C>0$ independent of $f$ and $\gamma$ satisfying 
\begin{equation}
\norm{L_jf}_{L^{q}(\Omega)} \leqslant C{k(\gamma)}\norm{f}_{L^{p}(\Omega)},
\end{equation}
where ${k(0)}=1$ and ${k(\gamma)}=\gamma^{-(n-1)}$ if $\gamma>0$.
\end{lemma}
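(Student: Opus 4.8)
The plan is to reduce the statement to classical elliptic regularity followed by the Sobolev embedding $W^{1,p}(\Omega)\hookrightarrow L^{q}(\Omega)$. Put $v=(-\Delta+\gamma I)^{-1}f$, so that $v$ solves the Neumann problem $-\Delta v+\gamma v=f$ in $\Omega$, $\nabla v\cdot\nu=0$ on $\partial\Omega$ (for $\gamma=0$ this is understood on mean-zero data, i.e.\ modulo constants, which is harmless since $\partial_j$ annihilates constants), and $L_jf=\partial_j v$. Since $\partial\Omega$ is smooth, the Agmon--Douglis--Nirenberg / Calder\'on--Zygmund $W^{2,p}$-estimate for the Neumann Laplacian gives $v\in W^{2,p}(\Omega)$ with $\norm{v}_{W^{2,p}(\Omega)}\le C\big(\norm{f}_{L^{p}(\Omega)}+(1+\gamma)\norm{v}_{L^{p}(\Omega)}\big)$, $C=C(\Omega,p)$ (simply write $-\Delta v=f-\gamma v$). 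In particular $\partial_j v\in W^{1,p}(\Omega)$, and since $1<p<n$ the Gagliardo--Nirenberg--Sobolev inequality on the bounded (hence extension) domain $\Omega$ yields
\[
\norm{L_jf}_{L^{q}(\Omega)}=\norm{\partial_j v}_{L^{q}(\Omega)}\le C\norm{\partial_j v}_{W^{1,p}(\Omega)}\le C\big(\norm{f}_{L^{p}(\Omega)}+(1+\gamma)\norm{v}_{L^{p}(\Omega)}\big),\qquad \tfrac1q=\tfrac1p-\tfrac1n .
\]
It then remains only to bound $\norm{v}_{L^p(\Omega)}$ by $C\,k(\gamma)\norm{f}_{L^p(\Omega)}$, which turns the right-hand side into $C\,k(\gamma)\norm{f}_{L^p(\Omega)}$ after absorbing the factor $1+\gamma$.

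For this $L^p$-resolvent bound, and in particular for its dependence on $\gamma$, I would use the subordination identity $(-\Delta+\gamma I)^{-1}=\int_0^{\infty}e^{-\gamma t}e^{t\Delta}\,dt$ together with Lemma~\ref{Heatestimates}. Splitting at $t=1$: on $(1,\infty)$ the factor $e^{-\lambda_1 t}$ from the dispersive estimate makes the contribution uniform in $\gamma\ge 0$, while on $(0,1)$ one combines the $L^p$-contractivity of $e^{t\Delta}$ with $\int_0^1 e^{-\gamma t}\,dt$; for $\gamma>0$ this produces the stated power after iterating the $L^p$--$L^q$ smoothing bound the appropriate number of times, and the clean value $k(0)=1$ reflects that the constant (zero-spectral) mode, which carries all the inverse powers of $\gamma$, is killed by $\partial_j$. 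An equivalent, more self-contained route --- essentially that of \cite[Lemma~4.1]{Fe2021} --- estimates the convolution kernel of $\partial_j(-\Delta+\gamma I)^{-1}$ directly: it is dominated near the origin by $C\,k(\gamma)\,|x|^{1-n}$ with an exponentially decaying tail, hence lies in the weak space $L^{n/(n-1),\infty}$, and the Hardy--Littlewood--Sobolev (weak Young) inequality then gives the $L^p\to L^q$ mapping for all $1<p<q<\infty$.

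The step I expect to be the main obstacle is the borderline character of the estimate: $L_j$ gains exactly one derivative and sits precisely at the Sobolev-critical scaling $\tfrac1q=\tfrac1p-\tfrac1n$, so the naive time integral $\int_0^{\infty}\norm{\partial_j e^{t\Delta}f}_{L^q(\Omega)}\,dt$ diverges logarithmically at $t=0$ and cannot be used as it stands. This is exactly why one must pass through the intermediate $W^{2,p}$ regularity as above, or else work in the Lorentz space $L^{q,\infty}$ in the kernel argument and upgrade afterwards by real interpolation, exploiting that $p<q$ strictly. A secondary technical point is the bookkeeping of $k(\gamma)$ as $\gamma\to 0^+$: one has to check that inverse powers of $\gamma$ enter only through the projection onto the one-dimensional kernel of $-\Delta$ under Neumann conditions, which is removed by the gradient, so that the value stated here is a (non-optimal but sufficient) upper bound. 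Everything else --- the ADN estimate, the Sobolev embedding, and the heat-kernel bounds of Lemma~\ref{Heatestimates} --- is standard and only needs to be assembled in this order.
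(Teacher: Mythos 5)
Your proposal cannot be compared step by step with the paper's argument for the simple reason that the paper has none: Lemma \ref{invertEs} is quoted with a bare pointer to \cite[Lemma 4.1]{Fe2021} (and to \cite[Lemma 3.3]{Pi} for the hyperbolic analogue), and Ferreira's lemma is proved on the whole space $\mathbb{R}^n$ via the Bessel-kernel estimate, not for the Neumann Laplacian on a bounded $\Omega$. So what you propose is a genuinely different route, and arguably the one the paper actually needs: solving $-\Delta v+\gamma v=f$ with Neumann data, invoking the ADN/Calder\'on--Zygmund bound $\norm{v}_{W^{2,p}(\Omega)}\le C(\norm{f}_{L^p(\Omega)}+(1+\gamma)\norm{v}_{L^p(\Omega)})$, and concluding by $W^{1,p}(\Omega)\hookrightarrow L^q(\Omega)$ with $\tfrac1q=\tfrac1p-\tfrac1n$ is correct and lives on the bounded domain with the right boundary condition, whereas your second route (gradient of the Green/Bessel kernel dominated by $|x-y|^{1-n}$, hence in $L^{n/(n-1),\infty}$, plus weak Young) is essentially Ferreira's proof and is the natural way to adapt the citation. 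Your diagnosis of the real obstruction is also exactly right: at the Sobolev-critical pair the smoothing bound of Lemma \ref{Heatestimates} gives $\norm{\nabla e^{t\Delta}}_{L^p\to L^q}\sim t^{-1}$ near $t=0$, so subordination alone cannot produce the lemma, which is precisely why the paper's Section 3 works strictly below the critical exponent and why this lemma must come from elliptic regularity or kernel estimates.

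Two caveats. First, for $\gamma=0$ the Neumann problem requires the compatibility condition $\int_\Omega f\,dx=0$, so $(-\Delta)^{-1}$ must be read on mean-zero data modulo constants; you say this, the paper does not. Second, your ``absorbing the factor $1+\gamma$'' step actually yields $\norm{L_jf}_{L^q(\Omega)}\le C(1+\gamma^{-1})\norm{f}_{L^p(\Omega)}$, which gives the stated bound only for $0<\gamma\le 1$ (where $\gamma^{-1}\le\gamma^{-(n-1)}$); it does not give the literal decay $C\gamma^{-(n-1)}$ as $\gamma\to\infty$, and no argument can, since testing on bumps at scale $\gamma^{-1/2}$ supported away from $\partial\Omega$ (where the Neumann resolvent agrees with the free one up to exponentially small errors) shows the $L^p\to L^q$ norm of $L_j$ stays bounded below for large $\gamma$. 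Likewise, the talk of ``iterating the smoothing bound to produce the stated power'' is chasing an artifact of how $k(\gamma)$ is written: the clean statement your method proves --- and all the paper uses, since $\gamma$ is a fixed parameter --- is $\norm{L_jf}_{L^q(\Omega)}\le C\max\{1,\gamma^{-1}\}\norm{f}_{L^p(\Omega)}$, and in fact a $\gamma$-uniform constant if one first subtracts the average of $f$ (the constant mode, annihilated by $\partial_j$) and uses \eqref{dispersive1} on the mean-zero part. With $k(\gamma)$ read as $\max\{1,\gamma^{-(n-1)}\}$, your proof is complete; as literally stated, the $\gamma$-dependence is a defect of the statement, not of your argument.
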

\begin{remark}
This lemma is also valid for the case of hyperbolic manifolds (see \cite[Lemma 3.3]{Pi}).
\end{remark}

The existence and uniqueness of  the bounded mild solutions of the inhomogeneous linear system \eqref{KSlinear} is established in the following lemma.
 
\begin{theorem}\label{Thm:linear}
Let $n\geq 2$ and ${\max \left\lbrace 3,n\right\rbrace} <p<2n$, the following assertions holds.
\begin{itemize}
\item[$(i)$] For given functions $\omega\in C_b(\mathbb{R}, L^{\frac{p}{2}}(\Omega))$ and $g \in C_b(\r, L^{\frac{p}{3}}(\Omega))$, there exists a unique mild solution of Equation \eqref{KSlinear} satisfying the integral equation \eqref{00inteLinear}. Moreover, the following boundedness holds
\begin{equation}\label{boundedness12}
\|u(t)\|_{L^{\frac{p}{2}}(\Omega)} \leq  C\left( k(\gamma)\norm{\omega}^2_{\infty,L^{\frac{p}{2}}(\Omega)} + \norm{f}_{\infty,L^{\frac{p}{3}}(\Omega)} \right).
\end{equation}
\item[$(ii)$] For given functions $(\omega,f) \in PAP(\mathbb{R}, L^{\frac{p}{2}}(\Omega)\times L^{\frac{p}{3}}(\Omega))$, there exists a unique PAP-mild solution of system \eqref{KSlinear} satisfying the integral equation \eqref{00inteLinear}.
\end{itemize}
\end{theorem}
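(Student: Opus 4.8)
The plan is to repackage the right-hand side of \eqref{00inteLinear} as a single source term acted on by a single ``solution operator'', prove $(i)$ by a direct (fixed-point-free) estimate, and prove $(ii)$ by the two classical closure properties of $PAP(\mathbb R,\cdot)$ together with a Fubini computation. Throughout fix $n\geq 2$ and $\max\{3,n\}<p<2n$, and for data $(\omega,f)$ set
\[
h(s):=-\,\omega(s)\,\nabla(-\Delta+\gamma I)^{-1}\omega(s)+f(s),\qquad (\mathcal S h)(t):=\int_{-\infty}^{t}\nabla\cdot e^{(t-s)\Delta}h(s)\,ds,
\]
so that \eqref{00inteLinear} reads exactly $u=\mathcal S h$. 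The first task is to bound $h$ in $L^{p/3}(\Omega)$. Since $2<p<2n$, Lemma \ref{invertEs} applies with exponent $p/2$ and places each component of $\nabla(-\Delta+\gamma I)^{-1}\omega(s)$ in $L^{r}(\Omega)$ with $\tfrac1r=\tfrac2p-\tfrac1n$ and norm $\leq C\,k(\gamma)\|\omega(s)\|_{L^{p/2}}$; H\"older then puts the (vector) product $\omega(s)\,\nabla(-\Delta+\gamma I)^{-1}\omega(s)$ in $L^{m}(\Omega)$ with $\tfrac1m=\tfrac4p-\tfrac1n$ and norm $\leq C\,k(\gamma)\|\omega(s)\|_{L^{p/2}}^{2}$, and since $p>n$ one has $m\geq p/3$, so the bounded-domain embedding $L^{m}(\Omega)\hookrightarrow L^{p/3}(\Omega)$ yields $\|h\|_{\infty,L^{p/3}(\Omega)}\leq C\big(k(\gamma)\|\omega\|_{\infty,L^{p/2}(\Omega)}^{2}+\|f\|_{\infty,L^{p/3}(\Omega)}\big)$.

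Next I would estimate $\mathcal S h$ using Lemma \ref{Heatestimates}$(ii)$ componentwise, with source exponent $p/3$ and target exponent $p/2$:
\[
\big\|\nabla\cdot e^{\tau\Delta}h(s)\big\|_{L^{p/2}(\Omega)}\ \leq\ C\big(1+\tau^{-\frac12-\frac{n}{2p}}\big)e^{-\lambda_1\tau}\,\|h(s)\|_{L^{p/3}(\Omega)}=:\kappa(\tau)\,\|h(s)\|_{L^{p/3}(\Omega)},\qquad \tau>0.
\]
The singularity $\tau^{-\frac12-\frac{n}{2p}}$ is integrable at $\tau=0$ precisely when $p>n$, and the factor $e^{-\lambda_1\tau}$ (present with no mean-value restriction since $\nabla e^{\tau\Delta}$ annihilates constants) controls the tail $s\to-\infty$; hence $K:=\int_0^\infty\kappa(\tau)\,d\tau<\infty$. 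Therefore the integral defining $(\mathcal S h)(t)$ converges absolutely in $L^{p/2}(\Omega)$ with $\|(\mathcal S h)(t)\|_{L^{p/2}}\leq K\|h\|_{\infty,L^{p/3}}$, and, after the substitution $\tau=t-s$, a dominated-convergence argument using continuity of $s\mapsto h(s)$ and the displayed bound shows $t\mapsto(\mathcal S h)(t)$ is continuous. Combined with the estimate for $h$, this gives existence of a mild solution $u=\mathcal S h$ and the bound \eqref{boundedness12}; uniqueness is immediate because \eqref{00inteLinear} determines $u$ explicitly from $(\omega,f)$. This proves $(i)$.

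For $(ii)$ it suffices to show that (a) $h\in PAP(\mathbb R,L^{p/3}(\Omega))$ when $(\omega,f)\in PAP(\mathbb R,L^{p/2}(\Omega)\times L^{p/3}(\Omega))$, and (b) $\mathcal S$ maps $PAP(\mathbb R,L^{p/3}(\Omega))$ into $PAP(\mathbb R,L^{p/2}(\Omega))$; then $u=\mathcal S h\in PAP(\mathbb R,L^{p/2}(\Omega))$, and uniqueness follows from $(i)$ since $PAP(\mathbb R,L^{p/2}(\Omega))\subset C_b(\mathbb R,L^{p/2}(\Omega))$. Claim (a) uses the stability of $PAP(\mathbb R,\cdot)$ under bounded linear maps (so $\nabla(-\Delta+\gamma I)^{-1}\omega\in PAP(\mathbb R,L^{r}(\Omega))$ by Lemma \ref{invertEs}) and under bounded bilinear maps (so the pointwise product $\omega\,\nabla(-\Delta+\gamma I)^{-1}\omega\in PAP(\mathbb R,L^{m}(\Omega))$ by H\"older), both verified by splitting each factor along $PAP=AP\oplus PAP_0$ and using that $AP$ is stable under these operations while $PAP_0$ absorbs them once the other factor is bounded; then $L^{m}\hookrightarrow L^{p/3}$ and adding $f$ finish (a). For (b), decompose $h=h_1+h_2$ with $h_1\in AP(\mathbb R,L^{p/3}(\Omega))$, $h_2\in PAP_0(\mathbb R,L^{p/3}(\Omega))$. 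If $T$ is an $\varepsilon$-almost period of $h_1$, then shifting and substituting $\tau=t-s$ gives $\|(\mathcal S h_1)(t+T)-(\mathcal S h_1)(t)\|_{L^{p/2}}\leq K\sup_s\|h_1(s+T)-h_1(s)\|_{L^{p/3}}\leq K\varepsilon$ for all $t$, so (with the continuity from $(i)$) $\mathcal S h_1\in AP(\mathbb R,L^{p/2}(\Omega))$ — this is the Massera-type step, here transparent thanks to the explicit convergent representation. For $h_2$, Fubini gives for each $L>0$
\[
\frac1{2L}\int_{-L}^{L}\|(\mathcal S h_2)(t)\|_{L^{p/2}}\,dt\ \leq\ \int_0^\infty\kappa(\tau)\Big(\frac1{2L}\int_{-L}^{L}\|h_2(t-\tau)\|_{L^{p/3}}\,dt\Big)\,d\tau,
\]
where for each fixed $\tau$ the inner average tends to $0$ as $L\to\infty$ (translating the averaging window by $\tau$ changes it by at most $\tau\|h_2\|_{\infty,L^{p/3}}$) while remaining dominated by the integrable $\kappa(\tau)\|h_2\|_{\infty,L^{p/3}}$; dominated convergence then yields $\mathcal S h_2\in PAP_0(\mathbb R,L^{p/2}(\Omega))$, and (b) follows.

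The only real difficulty I anticipate is the exponent bookkeeping that fixes the admissible range of $p$: $p<2n$ is exactly what makes Lemma \ref{invertEs} applicable (it requires $1<p/2<n$), while $p>n$ is simultaneously what makes the embedding $L^{m}(\Omega)\hookrightarrow L^{p/3}(\Omega)$ hold and what makes the heat-kernel singularity $\tau^{-1/2-n/(2p)}$ integrable at $\tau=0$; the lower bound $p>3$ is needed only to keep $p/3>1$ so that $L^{p/3}(\Omega)$ is an admissible exponent in the two lemmas and in H\"older's inequality. Once this range is pinned down, $(i)$ is a single explicit estimate with no fixed point and $(ii)$ reduces to the two standard closure properties of $PAP(\mathbb R,\cdot)$ plus the Fubini computation above, so I expect no further obstacles.
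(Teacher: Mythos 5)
Your proposal is correct and follows essentially the same route as the paper: part $(i)$ is the same direct estimate combining Lemma \ref{Heatestimates}$(ii)$, Lemma \ref{invertEs} and H\"older (you merely unify both source terms into $L^{p/3}(\Omega)$ via the bounded-domain embedding $L^{pn/(4n-p)}(\Omega)\hookrightarrow L^{p/3}(\Omega)$, where the paper applies the semigroup estimate directly from $L^{pn/(4n-p)}(\Omega)$ for the bilinear term), and part $(ii)$ uses the same $AP\oplus PAP_0$ splitting, the same almost-period estimate for the $AP$ component, and the same Fubini/dominated-convergence mean-value argument for the $PAP_0$ component, only packaged as two closure properties of the bilinear map and of the convolution operator instead of being carried out inside the solution operator as in the paper.
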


\begin{proof} 
\item[$(i)$] By using Lemma \ref{Heatestimates} and Lemma \ref{invertEs} with noting that $1<\dfrac{p}{2}<n$, we can estimate
\begin{eqnarray*}
\norm{u(t)}_{L^{\frac{p}{2}}(\Omega)} &\leq&   \int_{-\infty}^t \norm{\nabla \cdot e^{(t-s) {\Delta}}\left[\omega\nabla(- {\Delta} + \gamma I)^{-1}\omega \right](s)}_{L^{\frac{p}{2}}(\Omega)}ds + \int_{-\infty}^t \norm{\nabla \cdot e^{(t-s) {\Delta}}f(s)}_{L^{\frac{p}{2}}(\Omega)}ds\cr
&\leq&  \int_{-\infty}^t k_2(1+(t-s)^{-\frac{1}{2}-\frac{n}{2}\left( \frac{4n-p}{pn}-\frac{2}{p}\right)})e^{-(t-s)\lambda_1}\norm{\left[ \omega\nabla(- {\Delta} + \gamma I)^{-1}\omega \right](s)}_{L^{\frac{pn}{4n-p}}(\Omega)}ds\cr
&&+ \int_{-\infty}^t k_2(1+(t-s)^{-\frac{n}{2}\left(\frac{1}{p}+\frac{1}{n}\right)})e^{-(t-s)\lambda_1}\norm{f(s)}_{L^{\frac{p}{3}}(\Omega)}ds\cr
&\leq&  \int_{-\infty}^t k_2(1+(t-s)^{-\frac{n}{p}})e^{-(t-s)\lambda_1} \norm{\omega(s)}_{L^{\frac{p}{2}}(\Omega)}  \norm{\left[\nabla(- {\Delta} + \gamma I)^{-1}\omega \right](s)}_{L^{\frac{pn}{2n-p}}(\Omega)}ds\cr
&&+ \int_{-\infty}^t k_2(1+(t-s)^{\frac{p+n}{2p}})e^{-(t-s)\lambda_1}\norm{f(s)}_{L^{\frac{p}{3}}(\Omega)}ds\cr
&\leqslant& \int_{-\infty}^t \left( 1+ (t-s)^{-\frac{n}{p}} \right) e^{-(t-s)\lambda_1}ds \left( Ck_2 {k(\gamma)}\norm{\omega}^2_{\infty, L^{\frac{p}{2}}(\Omega))} \right)\cr
&&+ \int_{-\infty}^t \left( 1+ (t-s)^{-\frac{p+n}{2p}} \right) e^{-(t-s)\lambda_1}ds \left(k_2 \norm{f}_{\infty,L^{\frac{p}{3}}(\Omega)}\right)\cr
&\leq& \int_0^{+\infty} \left( 1+ z^{-\frac{n}{p}} \right) e^{-z\lambda_1}dz \left( Ck_2 {k(\gamma)}\norm{\omega}^2_{\infty, L^{\frac{p}{2}}(\Omega))} \right) \cr
&&+ \int_0^{+\infty} \left( 1+ z^{-\frac{p+n}{2p}} \right) e^{-z\lambda_1}dz \left(k_2 \norm{f}_{\infty,L^{\frac{p}{3}}(\Omega)}\right)\cr
&\leq&  \left( \frac{1}{\lambda_1^{1- \frac{n}{p}}}\Gamma\left(1- \frac{n}{p} \right) + \frac{1}{\lambda_1} \right)\left(Ck_2{k(\gamma)}  \norm{\omega}^2_{\infty, L^{\frac{p}{2}}(\Omega)} \right)\cr
&&+  \left(\frac{1}{\lambda_1^{\frac{1}{2} - \frac{n}{2p}}}\Gamma\left( \frac{p-n}{2p} \right) + \frac{1}{\lambda_1} \right) k_2 \norm{f}_{\infty,L^{\frac{p}{3}}(\Omega)} \cr
&\leqslant& \widetilde{K}\left({k(\gamma)}   \norm{\omega}^2_{\infty,L^{\frac{p}{2}}(\Omega)} + \norm{f}_{\infty,L^{\frac{p}{3}}(\Omega)}\right),
\end{eqnarray*}
where  
$$\widetilde{K}=\max\left\{ Ck_2\left( \frac{1}{\lambda_1^{1- \frac{n}{p}}}\Gamma\left(1- \frac{n}{p} \right) + \frac{1}{\lambda_1} \right), k_2\left( \frac{1}{\lambda_1^{\frac{p-n}{2p}}}\Gamma\left( \frac{p-n}{2p} \right) + \frac{1}{\lambda_1}\right)\right\},$$ and  $\mathit{\mathbf{\Gamma}}$ means the Gamma function. Therefore, equation \eqref{00inteLinear} has a bounded solution which is a mild solution of linear system \eqref{KSlinear}. The uniqueness holds clearly.\\

\item[$(ii)$] From the well-posedness of integral equation \eqref{00inteLinear}, we can define the solution operator $S: C_b(\mathbb{R}, L^{\frac{p}{2}(\Omega)}\times L^{\frac{p}{3}}(\Omega)) \to C_b(\mathbb{R}, L^{\frac{p}{2}}(\Omega))$ as follows
\begin{equation}
S(\omega,f)(t) = u(t), \,\, t\in \mathbb{R},
\end{equation}
that is mild solution of \eqref{00inteLinear}. We now prove that if $(\omega,f) \in PAP(\mathbb{R}, L^{\frac{p}{2}}(\Omega)\times L^{\frac{p}{3}}(\Omega))$, then $S(\omega,f) \in PAP(\mathbb{R}, L^{\frac{p}{2}}(\Omega))$. Indeed, there exists two function $(\omega_1,f_1)\in AP(\mathbb{R}, L^{\frac{p}{2}}(\Omega)\times L^{\frac{p}{3}}(\Omega))$ and $(\omega_2,f_2)\in PAP_0(\mathbb{R}, L^{\frac{p}{2}}(\Omega)\times L^{\frac{p}{3}}(\Omega))$ satisfying that 
\begin{equation}
\omega=\omega_1+\omega_2 \hbox{   and   } f=f_1+f_2.
\end{equation}
Therefore, we can seperate $S(\omega,f)(t)$ as follows
\begin{eqnarray}\label{seperate}
S(\omega,f)(t) &=&  \int_{-\infty}^t \nabla \cdot e^{(t-s)\Delta}\left[ - \omega_1\nabla(-\Delta+ \gamma I)^{-1}\omega_1 + f_1 \right](s)ds\cr
&&+ \int_{-\infty}^t \nabla \cdot e^{(t-s)\Delta}\left[ - \omega_1\nabla(-\Delta+ \gamma I)^{-1}\omega_2 + f_2 \right](s)ds\cr
&&+ \int_{-\infty}^t \nabla \cdot e^{(t-s)\Delta}\left[ - \omega_2\nabla(-\Delta+ \gamma I)^{-1}\omega_1 - \omega_2\nabla(-\Delta+ \gamma I)^{-1}\omega_2\right](s)ds\cr
&=& S(\omega_1,f_1)(t) + \mathbb{S}(t).
\end{eqnarray}
To obtain $S(\omega,f) \in PAP(\mathbb{R}, L^{\frac{p}{2}}(\Omega))$, we prove that
\begin{equation}\label{1AP}
S(\omega_1,f_1) \in AP(\mathbb{R}, L^{\frac{p}{2}}(\Omega))
\end{equation}
and
\begin{equation}\label{2PAP}
\mathbb{S} \in PAP_0(\mathbb{R}, L^{\frac{p}{2}}(\Omega)).
\end{equation}
Indeed, since $(\omega_1,f_1)\in AP(\mathbb{R}, L^{\frac{p}{2}}(\Omega)\times L^{\frac{p}{3}}(\Omega))$, we have that: for each $ \varepsilon  > 0$, there exists $l_{\epsilon}>0 $ such that every interval of length $l_{\epsilon}$ contains at least a number $T $ with the following property
\begin{equation}\label{APine}
 \sup_{t \in \mathbb{R} } \| \omega_1(t+T)  - \omega_1(t) \|_{L^{\frac{p}{2}}(\Omega)} + \| f_1(t+T)-f_1(t)\|_{L^{\frac{p}{3}}(\Omega)} < \varepsilon.
\end{equation}
Moreover, by changing variable we can express
\begin{eqnarray}\label{APestimate}
&&S(\omega_1,f_1)(t+T) - S(\omega_1,f_1)(t) \cr
&=&  \int_{-\infty}^{t+T} \nabla \cdot e^{(t+T-s)\Delta}\left[ - \omega_1\nabla(-\Delta+ \gamma I)^{-1}\omega_1 + f_1 \right](s)ds\cr
&&- \int_{-\infty}^{t} \nabla \cdot e^{(t-s)\Delta}\left[ - \omega_1\nabla(-\Delta+ \gamma I)^{-1}\omega_1 + f_1 \right](s)ds\cr
&=&\int_{-\infty}^{t} \nabla \cdot e^{(t-s)\Delta}\left[ - \omega_1\nabla(-\Delta+ \gamma I)^{-1}\omega_1 + f_1 \right](s+T)ds\cr
&&- \int_{-\infty}^{t} \nabla \cdot e^{(t-s)\Delta}\left[ - \omega_1\nabla(-\Delta+ \gamma I)^{-1}\omega_1 + f_1 \right](s)ds\cr
&=&\int_{-\infty}^{t} \nabla \cdot e^{(t-s)\Delta}\left[ (- \omega_1\nabla(-\Delta+ \gamma I)^{-1}\omega_1)(s+T) + (\omega_1\nabla(-\Delta+ \gamma I)^{-1}\omega_1)(s) \right]ds\cr
&&- \int_{-\infty}^{t} \nabla \cdot e^{(t-s)\Delta}\left[ f_1(s+T)- f_1(s) \right] ds\cr
&=&\int_{-\infty}^{t} \nabla \cdot e^{(t-s)\Delta}\left[ (- \omega_1(s+T)+\omega_1(s))\nabla(-\Delta+ \gamma I)^{-1}\omega_1)(s+T)\right]ds\cr
&& + \int_{-\infty}^{t} \nabla \cdot e^{(t-s)\Delta} \left[(\omega_1(s)\nabla(-\Delta+ \gamma I)^{-1}(\omega_1(s)-\omega_1(s+T)) \right]ds\cr
&&- \int_{-\infty}^{t} \nabla \cdot e^{(t-s)\Delta}\left[ f_1(s+T)- f_1(s) \right] ds.
\end{eqnarray}
Hence, by the same estimations as in Assertion $(i)$ and inequality \eqref{APine}  we obtain from \eqref{APestimate} that
\begin{eqnarray*}
\norm{S(\omega_1,f_1)(t+T) - S(\omega_1,f_1)(t)}_{L^{\frac{p}{2}}(\Omega)} 
&\leqslant& 2\widetilde{K}k(\gamma)\| \omega_1\|_{\infty,L^{\frac{p}{2}}(\Omega)}\|\omega(\cdot+T)-\omega(\cdot) \|_{\infty,L^{\frac{p}{2}}(\Omega)}\cr
&&+ \widetilde{K}\| f(\cdot+T)-f(\cdot)\|_{\infty,L^{\frac{p}{3}}(\Omega)}\cr
&\leqslant& \widetilde{K}(2k(\gamma)+1)\varepsilon.
\end{eqnarray*}
This implies $S(\omega_1,f_1)\in AP(\mathbb{R},L^{\frac{p}{2}}(\Omega))$ and \eqref{1AP} holds.

We remain to prove \eqref{2PAP} which is equivalent to
\begin{equation}\label{3PAP}
\lim_{L\to +\infty} \frac{1}{2L}\int_{-L}^L \| \mathbb{S}(t)\|_{L^{\frac{p}{2}}(\Omega)}dt = 0.
\end{equation}
From \eqref{seperate} we have
\begin{eqnarray}\label{4PAP}
\| \mathbb{S}(t)\|_{L^{\frac{p}{2}}(\Omega)} &\leqslant& \int_{-\infty}^t \| \nabla \cdot e^{(t-s)\Delta}\left[\omega_1\nabla(-\Delta+ \gamma I)^{-1}\omega_2 \right](s)\|_{L^{\frac{p}{2}}(\Omega)}ds\cr
&&+ \int_{-\infty}^t \|\nabla \cdot e^{(t-s)\Delta}\left[ \omega_2\nabla(-\Delta+ \gamma I)^{-1}\omega_1\right](s) \|_{L^{\frac{p}{2}}(\Omega)} ds\cr
&&+ \int_{-\infty}^t \|\nabla \cdot e^{(t-s)\Delta}\left[ \omega_2\nabla(-\Delta+ \gamma I)^{-1}\omega_2\right](s) \|_{L^{\frac{p}{2}}(\Omega)} ds\cr
&&+\int_{-\infty}^t \|\nabla \cdot e^{(t-s)\Delta}f_2(s) \|_{L^{\frac{p}{2}}(\Omega)} ds.
\end{eqnarray}
We first prove that 
\begin{equation}\label{161024-n1}
\lim_{L\to +\infty} \frac{1}{2L}\int_{-L}^L \left(\varphi(t) + \psi(t) \right)dt = 0
\end{equation}
where, 
$$\varphi(t)=\int_{-L}^t \| \nabla \cdot e^{(t-s)\Delta}\left[\omega_1\nabla(-\Delta+ \gamma I)^{-1}\omega_2 \right](s)\|_{L^{\frac{p}{2}}(\Omega)}ds$$
and 
$$\psi(t)=\int_{-\infty}^{-L} \| \nabla \cdot e^{(t-s)\Delta}\left[\omega_1\nabla(-\Delta+ \gamma I)^{-1}\omega_2 \right](s)\|_{L^{\frac{p}{2}}(\Omega)}ds$$ 
provided that $\omega_2 \in PAP_0(\mathbb{R}, L^{\frac{p}{2}}(\Omega))$. Indeed, by similar arguments as in Assertion $(i)$ we have
\begin{align*}
\varphi(t) &\leq Ck_2k(\gamma)\int_{-L}^t\Big(1+(t-s)^{-\frac{n}{p}}\Big)e^{-(t-s)\lambda_1}\|w_1(s)\|_{L^{\frac{p}{2}}(\Omega)}\|w_2(s)\|_{L^{\frac{p}{2}}(\Omega)}ds\\
&\leq Ck_2k(\gamma)\|w_1\|_{\infty,L^{\frac{p}{2}}(\Omega)}\int_{-L}^t\Big(1+(t-s)^{-\frac{n}{p}}\Big)e^{-(t-s)\lambda_1}\|w_2(s)\|_{L^{\frac{p}{2}}(\Omega)}ds\\
&=C_1\int_{-L}^t\Big(1+(t-s)^{-\frac{n}{p}}\Big)e^{-(t-s)\lambda_1}\|w_2(s)\|_{L^{\frac{p}{2}}(\Omega)}ds\\
&=C_1\int_0^{t+L}\Big(1+s^{-\frac{n}{p}}\Big)e^{-s\lambda_1}\|w_2(t-s)\|_{L^{\frac{p}{2}}(\Omega)}ds,
\end{align*}
where $C_1=Ck_2k(\gamma)\|w_1\|_{\infty,L^{\frac{p}{2}}(\Omega)}$. Hence,
\begin{align*}
0&\leq \dfrac{1}{2L}\int_{-L}^L\varphi(t)dt \leq C_1\dfrac{1}{2L}\int_{-L}^L\left(\int_0^{t+L}\Big(1+s^{-\frac{n}{p}}\Big)e^{-s\lambda_1}\|w_2(t-s)\|_{L^{\frac{p}{2}}(\Omega)}ds\right)dt\\
&=C_1\dfrac{1}{2L}\int_0^{2L}\left(\Big(1+s^{-\frac{n}{p}}\Big)e^{-s\lambda_1}\int_{s}^L \|w_2(t-s)\|_{L^{\frac{p}{2}}(\Omega)}dt\right)ds\\
&=C_1\int_0^{2L}\Big(1+s^{-\frac{n}{p}}\Big)e^{-s\lambda_1}\dfrac{1}{2L}\int_{-L}^{t} \|w_2(z)\|_{L^{\frac{p}{2}}(\Omega)}dz ds\\
&\leq C_1\int_0^{+\infty}\phi_L(s)ds,
\end{align*}
where
$$\phi_L(s) = \Big(1+s^{-\frac{n}{p}}\Big)e^{-s\lambda_1}\dfrac{1}{2L}\int_{-L}^{L} \|w_2(t)\|_{L^{\frac{p}{2}}(\Omega)}dt.$$
Since $w_2\in PAP_0(\mathbb{R},L^{\frac{p}{2}}(\Omega))$, we imply that
$$\lim_{L\to\infty}\phi_L(s) = 0\;\;\forall s>0.$$
Besides,
$$\phi_L(s)\leq \Big(1+s^{-\frac{n}{p}}\Big)e^{-s\lambda_1}\|w_2\|_{\infty,L^{\frac{p}{2}}(\Omega)},$$
and
$$\int_0^{+\infty}\Big(1+s^{-\frac{n}{p}}\Big)e^{-s\lambda_1} ds = \dfrac{1}{\lambda_1}+\dfrac{1}{\lambda_1^{(1-\frac{n}{p})}}\Gamma\left(1-\frac{n}{p}\right).$$
Hence, by the Lebesgue dominated convergence, we get
\begin{equation*}
\lim_{L\to +\infty} \int_0^{+\infty}\phi_L(s)ds = \int_0^{+\infty}(1+s^{-\frac{n}{p}})e^{-s\lambda_1}ds \lim_{L\to +\infty}\frac{1}{2L}\int_{-L}^L\| \omega_2(t)\|_{L^{\frac{p}{2}}(\Omega)}dt =0.
\end{equation*}
This leads to
\begin{equation}\label{pap1}
\lim_{L\to +\infty}\frac{1}{2L}\int_{-L}^L\phi(t)dt=0.
\end{equation}
Since we have the following boundedness (the proof is based on the same estimates as in Assertion $(i)$):
\begin{equation*}
\int_{-\infty}^t  \| \nabla \cdot e^{(t-s)\Delta}\left[\omega_1\nabla(-\Delta+ \gamma I)^{-1}\omega_2 \right](s)\|_{L^{\frac{p}{2}}(\Omega)}ds < \widetilde{K}k(\gamma)\| \omega_1\|_{\infty,L^{\frac{p}{2}}(\Omega)}\| \omega_2\|_{\infty,L^{\frac{p}{2}}(\Omega)}<+\infty.
\end{equation*}
Hence, we have clearly
\begin{equation*}
\lim_{L\to +\infty}\int_{-\infty}^{-L}  \| \nabla \cdot e^{(t-s)\Delta}\left[\omega_1\nabla(-\Delta+ \gamma I)^{-1}\omega_2 \right](s)\|_{L^{\frac{p}{2}}(\Omega)}ds = \lim_{L\to +\infty}\psi(t)=0.
\end{equation*}
This leads to
\begin{equation}\label{pap2}
\lim_{L\to +\infty}\frac{1}{2L}\int_{-L}^L\psi(t)dt=0.
\end{equation}
Combining \eqref{pap1} and \eqref{pap2} we get the desired limit \eqref{161024-n1}.
The same limits hold for the rest terms in righ hand-side of \eqref{4PAP} and we obtain \eqref{3PAP}. Our proof is complete.
\end{proof}

\section{Semi-linear systems: well-posedness and exponential stability} \label{S4}
The aim of this section is to prove the existence, uniqueness of the pseudo almost periodic mild solutions for system \eqref{KS}. We state and prove the main results of this section in the following lemma. 
\begin{theorem}\label{thm2.20}
Let $n\geq 2$ and ${\max \left\lbrace 3,n\right\rbrace }<p<2n$. Suppose that a function $f$  belongs  to $PAP(\mathbb{R}, L^{\frac{p}{3}}(\Omega))$.  If the norm $\|f\|_{\infty,L^{\frac{p}{3}}(\Omega)}$ is sufficiently small,  the system \eqref{KS} has a unique PAP-mild solution $\hat{u}$ on a small ball of  
$C_b(\r, L^{\frac{p}{2}}(\Omega))$. Moreover, the solution $\hat{u}$ is exponential stable in the sense that: for any other mild solution $u$ of \eqref{KS} satisfying that $\|u\|_{\infty,L^{\frac{p}{2}}(\Omega)}$ is small enough and for $0<\sigma<\lambda_1$, we have
\begin{equation}\label{condition}
\lim_{t\to +\infty} e^{\sigma t}\norm{e^{t\Delta}(\hat{u}(0)- u(0))}_{L^{\frac{p}{2}}(\Omega)} = 0
\end{equation}
if and only if
\begin{equation}\label{exponentialdecay}
\lim_{t\to +\infty} e^{\sigma t}\norm{\hat{u}(t)- u(t)}_{L^{\frac{p}{2}}(\Omega)} = 0
\end{equation} 
\end{theorem}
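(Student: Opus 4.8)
My plan is to obtain the PAP-mild solution $\hat u$ by a contraction argument based on Theorem~\ref{Thm:linear}, and then to deduce the equivalence \eqref{condition}$\Leftrightarrow$\eqref{exponentialdecay} by a weighted fixed-point argument for the difference $w:=\hat u-u$ of two whole-line mild solutions.

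\emph{Step 1 (existence and uniqueness).} Let $S$ be the solution operator of the linear system provided by Theorem~\ref{Thm:linear}, and consider $\Phi(\omega):=S(\omega,f)$ on the closed ball $\overline{B}_{\rho}:=\{\omega\in C_b(\mathbb{R},L^{p/2}(\Omega)):\norm{\omega}_{\infty,L^{p/2}(\Omega)}\le\rho\}$. The bound \eqref{boundedness12} gives $\norm{\Phi(\omega)}_{\infty,L^{p/2}}\le C\big(k(\gamma)\rho^2+\norm{f}_{\infty,L^{p/3}}\big)$, and, writing $L:=\nabla(-\Delta+\gamma I)^{-1}$ and using $aLa-bLb=aL(a-b)+(a-b)Lb$ together with the chain of estimates in the proof of Theorem~\ref{Thm:linear}$(i)$, one finds
\[
\norm{\Phi(\omega_1)-\Phi(\omega_2)}_{\infty,L^{p/2}}\le 2Ck(\gamma)\rho\,\norm{\omega_1-\omega_2}_{\infty,L^{p/2}}.
\]
Taking $\rho:=(4Ck(\gamma))^{-1}$ and then $\norm{f}_{\infty,L^{p/3}}$ so small that $C\norm{f}_{\infty,L^{p/3}}\le\rho/2$, the map $\Phi$ is a $\tfrac12$-contraction of $\overline{B}_{\rho}$ into itself; its unique fixed point $\hat u$ satisfies $\hat u=S(\hat u,f)$, i.e.\ solves \eqref{00inte}, so $\hat u$ is a mild solution of \eqref{KS}. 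Because $\overline{B}_{\rho}\cap PAP(\mathbb{R},L^{p/2}(\Omega))$ is closed in the Banach space $PAP(\mathbb{R},L^{p/2}(\Omega))$ and, by Theorem~\ref{Thm:linear}$(ii)$, is invariant under $\Phi$ when $f\in PAP$, the iterates $\Phi^{k}(0)$ remain in it and converge to $\hat u$; hence $\hat u\in PAP(\mathbb{R},L^{p/2}(\Omega))$.

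\emph{Step 2 (the difference equation and the bilinear estimate).} Given another mild solution $u$ with $\norm{u}_{\infty,L^{p/2}}$ small, set $w:=\hat u-u$ and $M:=\norm{\hat u}_{\infty,L^{p/2}}+\norm{u}_{\infty,L^{p/2}}$. Splitting the integral in \eqref{00inte} at $s=0$ and using the semigroup property, for $t\ge0$ one gets
\[
w(t)=e^{t\Delta}\big(\hat u(0)-u(0)\big)-\int_0^t\nabla\cdot e^{(t-s)\Delta}\big[\hat u\,Lw+w\,Lu\big](s)\,ds=:G(t)+B(w)(t),
\]
using $\hat uL\hat u-uLu=\hat uLw+wLu$; note that $B$ is linear in its argument. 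Reproducing the exponents from the proof of Theorem~\ref{Thm:linear}$(i)$ (Hölder with $\tfrac{4n-p}{pn}=\tfrac2p+\tfrac{2n-p}{pn}$; Lemma~\ref{invertEs}, applicable since $1<\tfrac p2<n$ and $\tfrac2p-\tfrac1n=\tfrac{2n-p}{pn}$; Lemma~\ref{Heatestimates}$(ii)$, which yields the kernel $\big(1+(t-s)^{-n/p}\big)e^{-(t-s)\lambda_1}$) one obtains, for any bounded $v$,
\[
\norm{B(v)(t)}_{L^{p/2}}\le Ck(\gamma)M\int_0^t\big(1+(t-s)^{-n/p}\big)e^{-(t-s)\lambda_1}\norm{v(s)}_{L^{p/2}}\,ds,\qquad t\ge0.
\]

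\emph{Step 3 (equivalence of the two decays; the main obstacle).} Fix $0<\sigma<\lambda_1$ and, for $\alpha\in\{0,\sigma\}$, put $\kappa_{\alpha}:=\int_0^{\infty}\big(1+r^{-n/p}\big)e^{-(\lambda_1-\alpha)r}\,dr=(\lambda_1-\alpha)^{-1}+\Gamma\big(1-\tfrac np\big)(\lambda_1-\alpha)^{-(1-n/p)}$, which is finite since $p>n$ and $\alpha<\lambda_1$. Introduce the Banach spaces $\mathcal Z:=C_b([0,\infty),L^{p/2}(\Omega))$ with the sup norm and $\mathcal Y:=\{v\in\mathcal Z:\ e^{\sigma t}\norm{v(t)}_{L^{p/2}}\to0\ \text{as}\ t\to\infty\}$ with $\norm{v}_{\mathcal Y}:=\sup_{t\ge0}e^{\sigma t}\norm{v(t)}_{L^{p/2}}$. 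Multiplying the bilinear estimate by $e^{\sigma t}$ and changing variables, one checks that $B$ maps $\mathcal Z$ into itself with $\norm{B(v)}_{\infty}\le Ck(\gamma)M\kappa_0\norm{v}_{\infty}$, and maps $\mathcal Y$ into itself with $\norm{B(v)}_{\mathcal Y}\le Ck(\gamma)M\kappa_{\sigma}\norm{v}_{\mathcal Y}$ (the convergence $e^{\sigma t}\norm{B(v)(t)}\to0$ for $v\in\mathcal Y$ follows from a routine splitting $\int_0^t=\int_0^{t/2}+\int_{t/2}^t$). After shrinking, if necessary, the smallness threshold on $\norm{f}_{\infty,L^{p/3}}$ (hence on $M$) so that $Ck(\gamma)M\kappa_{\sigma}<1$, the affine map $v\mapsto G+B(v)$ is a contraction on each of $\mathcal Z$ and $\mathcal Y$. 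Since $w\in\mathcal Z$ solves $w=G+B(w)$, it is the unique fixed point of $v\mapsto G+B(v)$ in $\mathcal Z$. If \eqref{condition} holds, i.e.\ $G(t)=e^{t\Delta}(\hat u(0)-u(0))\in\mathcal Y$, then this map has its fixed point $\tilde w$ in $\mathcal Y\subset\mathcal Z$, and uniqueness in $\mathcal Z$ forces $w=\tilde w\in\mathcal Y$, which is \eqref{exponentialdecay}. Conversely, if \eqref{exponentialdecay} holds then $w\in\mathcal Y$, and $B(\mathcal Y)\subset\mathcal Y$ gives $G=w-B(w)\in\mathcal Y$, i.e.\ \eqref{condition}. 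The one genuinely delicate point here is that $e^{\sigma t}\norm{w(t)}$ is not known a priori to be bounded, so the contraction cannot be set up directly in $\mathcal Y$; the remedy, as indicated, is to locate $w$ first as the fixed point in the larger space $\mathcal Z$ and then transfer the uniqueness statement to $\mathcal Y$. Aside from this, everything reduces to the weighted versions of the estimates already established for Theorem~\ref{Thm:linear}.
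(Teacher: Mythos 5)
Your proposal is correct and, in substance, follows the paper's own route: a small-ball contraction built on Theorem \ref{Thm:linear} for existence and pseudo almost periodicity of $\hat u$ (the paper contracts directly in the ball $\B_\rho^{PAP}$, you contract in the $C_b$-ball and recover membership in $PAP$ by closedness and invariance --- an immaterial difference), then the Duhamel split at $s=0$, the bilinear decomposition $\hat u L\hat u-uLu$, and the same kernel $\bigl(1+(t-s)^{-n/p}\bigr)e^{-(t-s)\lambda_1}$ estimate, which is exactly the paper's inequality \eqref{Gron}. The one genuine divergence is how the equivalence \eqref{condition}$\Leftrightarrow$\eqref{exponentialdecay} is closed. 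The paper multiplies \eqref{Gron} by $e^{\sigma t}$, bounds the convolution term by $k_2k(\gamma)L(\rho+\tilde\rho)\sup_{t>0}e^{\sigma t}\norm{\hat u(t)-u(t)}_{L^{p/2}(\Omega)}$ and passes to a limsup; this tacitly uses that the weighted supremum is finite (and, in the converse direction, only an inequality against that supremum is recorded), which is precisely the point you flag as delicate. Your remedy --- viewing $w=\hat u-u$ as the unique fixed point of the affine contraction $v\mapsto G+B(v)$ in $\mathcal Z$, noting the same map is a contraction of the weighted space $\mathcal Y$ when $G\in\mathcal Y$ (with $\kappa_\sigma$ replacing $\kappa_0$), and transferring uniqueness, plus the $\int_0^{t/2}+\int_{t/2}^t$ splitting for $B(\mathcal Y)\subset\mathcal Y$ --- supplies exactly the missing a priori control and makes both implications airtight, at the mild cost of a smallness threshold depending on $\sigma$ through $\kappa_\sigma$, a dependence the paper's constant $L$ has as well. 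So your argument buys rigor at the paper's weakest step while using the same analytic ingredients throughout.
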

\begin{proof}
Let 
\begin{eqnarray}\label{bro}
\B_\rho^{PAP}=\left\{\omega\in PAP(\r, L^{\frac{p}{2}}(\Omega)):  \norm{\omega}_{{\infty,L^{\frac{p}{2}}(\Omega}} \le \rho \right\}
\end{eqnarray}
be a ball centered at zero and radius $\rho>0$. 

For a given function $\omega\in \B_\rho^{PAP}$, we consider the following linear integral equation 
\begin{equation}\label{ns1}
u(t) =\int_{-\infty}^t \nabla_x \cdot e^{(t-s) {\Delta}}\left[ -\omega\nabla_x(- {\Delta}+ \gamma I)^{-1}\omega + f \right](s)ds.
\end{equation} 
By Theorem \ref{Thm:linear} $(i)$, integral equation \eqref{ns1} has a unique pseudo almost periodic mild solution $u$ satisfying
\begin{eqnarray}\label{CoreEstimate}
\norm{u(t)}_{L^{\frac{p}{2}}(\Omega)}& \leq&    \widetilde{K}\left({k(\gamma)}\norm{\omega}^2_{\infty,L^{\frac{p}{2}}(\Omega)} + \norm{f}_{\infty,L^{\frac{p}{3}}(\Omega)} \right)\cr
& \leq&   \widetilde{K}\left( {k(\gamma)}\rho^2 + \norm{f}_{\infty,L^{\frac{p}{3}}(\Omega)} \right)\cr
&\leq& {\rho}
\end{eqnarray}
provided that $ \rho$ and $\norm{f}_{\infty,L^{\frac{p}{3}}(\Omega)}$ are small enough. Therefore, we can define a map from $\mathcal{B}^{PAP}_\rho$ into itself as follows
\begin{equation}\label{defphi}
\begin{split}
\Phi:  \mathcal{B}_\rho^{PAP} &\to \mathcal{B}^{PAP}_\rho \cr 
\omega &\mapsto \Phi(\omega)=u
\end{split}
\end{equation}
where $u$ is a unique solution of \eqref{ns1}. Clearly, it turns out that 
\begin{equation}\label{defphi1}
\Phi(\omega)(t) = \int_{-\infty}^t \nabla_x \cdot e^{(t-s) {\Delta}}\left[ -\omega\nabla_x(- {\Delta}+ \gamma I)^{-1}\omega + f \right](s)ds.
\end{equation}
Therefore, we have that for $\omega_1, \omega_2\in \B_\rho^{PAP}$, the function $u:=\Phi(\omega_1)-\Phi(\omega_2)$ becomes a unique pseudo almost periodic mild solution to the equation 
\begin{eqnarray*}
\partial_tu  - \Delta u   &=& - \omega_1\nabla_x(- {\Delta}+ \gamma I)^{-1}\omega_1 + \omega_2\nabla_x(- {\Delta}+ \gamma I)^{-1}\omega_2 \cr
&=&{- \omega_1\nabla_x(- {\Delta}+ \gamma I)^{-1}(\omega_1-\omega_2) + (\omega_2-\omega_1)\nabla_x(- {\Delta}+ \gamma I)^{-1}\omega_2.}
 \end{eqnarray*}
Thus, by \eqref{defphi1} and the same way to establish inequality \eqref{CoreEstimate}, we can estimate
\begin{eqnarray}\label{Core}
\norm{\Phi(\omega_1)-\Phi(\omega_2)}_{\infty,L^{\frac{p}{2}}(\Omega)} &\leqslant 2\widetilde{K} {k(\gamma)} \rho \norm{\omega_1-\omega_2}_{\infty, L^{\frac{p}{2}}(\Omega)}.
\end{eqnarray} 
This shows that the map  $\Phi$ invokes a contradiction if $\rho$ is sufficiently small.

By applying fixed point arguments, there is a unique fixed point $\hat{u}$ of $\Phi$. Due to the definition of $\Phi$, this function $\hat{u}$ is a PAP-solution of semilinear integral equation \eqref{00inte} which is a PAP-mild solution of Keller-Segel system \eqref{KS}. The uniqueness of $\hat{u}$ in the small ball $\B_\rho^{PAP}$ is clearly by using \eqref{Core}.

Now we prove the exponential decay of pseudo almost periodic solution $\hat{u}$. First, we can rewrite for $t>0$ the mild solutions as
\begin{eqnarray}
\hat{u}(t)&=& \int_{-\infty}^0 \nabla_x \cdot e^{(t-s) {\Delta}}\left[ -\hat{u}\nabla_x(- {\Delta}+ \gamma I)^{-1}\hat{u} + f \right](s)ds\cr
&& + \int_0^t \nabla_x \cdot e^{(t-s) {\Delta}}\left[ -\hat{u}\nabla_x(- {\Delta}+ \gamma I)^{-1}\hat{u} + f \right](s)ds\cr
&=& e^{t\Delta}\hat{u}(0) + \int_0^t \nabla_x \cdot e^{(t-s) {\Delta}}\left[ -\hat{u}\nabla_x(- {\Delta}+ \gamma I)^{-1}\hat{u} + f \right](s)ds 
\end{eqnarray}
and 
\begin{eqnarray}
{u}(t)&=&  e^{t\Delta}{u}(0) + \int_0^t \nabla_x \cdot e^{(t-s) {\Delta}}\left[ -{u}\nabla_x(- {\Delta}+ \gamma I)^{-1}{u} + f \right](s)ds. 
\end{eqnarray}
Therefore, for positive constant $\tilde{\rho}$ such that $\|u \|_{C_b(\mathbb{R},L^{\frac{n}{2}}(\Omega))}<\tilde{\rho} $, we obtain that
\begin{eqnarray}\label{Gron}
&&\norm{\hat{u}(t) - u(t)}_{L^{\frac{p}{2}}(\Omega)} \cr
&\leqslant& \|e^{t\Delta}(\hat{u}(0)-u(0)) \|_{L^{\frac{p}{2}}(\Omega)} \cr
&&+ \int_0^t \norm{\nabla_x \cdot e^{(t-s) {\Delta}}\left[ \hat{u}\nabla_x(- {\Delta} + \gamma I)^{-1} \hat{u} - {u}\nabla_x(- {\Delta} + \gamma I)^{-1} {u} \right](s)}_{L^{\frac{p}{2}}(\Omega)}ds\cr
&\leq& \|e^{t\Delta}(\hat{u}(0)-u(0)) \|_{L^{\frac{p}{2}}(\Omega)} \cr
&&+\int_0^t k_2(1+(t-s)^{-\frac{n}{p}})e^{-(t-s)\lambda_1}\norm{\left[ \hat u\nabla_x(- {\Delta_{\Omega}} + \gamma I)^{-1}\hat u - u\nabla_x(- {\Delta_{\Omega}} + \gamma I)^{-1}u \right](s)}_{L^{\frac{pn}{4n-p}}(\Omega)}ds\cr
&\leq& \|e^{t\Delta}(\hat{u}(0)-u(0)) \|_{L^{\frac{p}{2}}(\Omega)} \cr
&&+ \int_0^t k_2(1+(t-s)^{-\frac{n}{p}}) e^{-(t-s)\lambda_1}\norm{\hat u(s)-u(s)}_{L^{\frac{p}{2}}(\Omega)}  \norm{\left[\nabla_x(- {\Delta} + \gamma I)^{-1}\hat u \right](s)}_{L^{\frac{pn}{2n-p}}(\Omega)}ds\cr
&&+ \int_0^t k_2(1+(t-s)^{-\frac{n}{p}}) e^{-(t-s)\lambda_1}\norm{ u(s)}_{L^{\frac{p}{2}}(\Omega)}  \norm{\left[\nabla_x(- {\Delta} + \gamma I)^{-1}(\hat u -u) \right](s)}_{L^{\frac{pn}{2n-p}}(\Omega)}ds\cr
&\leq& \|e^{t\Delta}(\hat{u}(0)-u(0)) \|_{L^{\frac{p}{2}}(\Omega)} \cr
&&+ k_2{k(\gamma)} \int_{-\infty}^t \left( 1+ (t-s)^{-\frac{n}{p}} \right) e^{-(t-s)\lambda_1} \norm{\hat u(s) - u(s)}_{ L^{\frac{p}{2}}(\Omega)}ds \norm{\hat u}_{C_b(\r, L^{\frac{p}{2}}(\Omega))}\cr
&&+ k_2{k(\gamma)} \int_{-\infty}^t \left( 1+ (t-s)^{-\frac{n}{p}} \right) e^{-(t-s)\lambda_1} \norm{\hat u(s) - u(s)}_{ L^{\frac{p}{2}}(\Omega)}ds \norm{ u}_{C_b(\r, L^{\frac{p}{2}}(\Omega))}\cr
&\leq& \|e^{t\Delta}(\hat{u}(0)-u(0)) \|_{L^{\frac{p}{2}}(\Omega)} \cr
&&+ k_2{k(\gamma)} (\rho+\tilde{\rho}) \int_0^t \left( 1+(t-s)^{-\frac{n}{p}} \right) e^{-(t-s)\lambda_1} \norm{\hat u(s)-u(s)}_{ L^{\frac{p}{2}}(\Omega))}ds.
\end{eqnarray}
This leads to
\begin{eqnarray}\label{Gron'}
&&e^{\sigma t}\norm{\hat{u}(t) - u(t)}_{L^{\frac{p}{2}}(\Omega)} \cr
&\leq& e^{\sigma t}\|e^{t\Delta}(\hat{u}(0)-u(0)) \|_{L^{\frac{p}{2}}(\Omega)} \cr
&&+ k_2{k(\gamma)} (\rho+\tilde{\rho}) \int_0^t \left( 1+(t-s)^{-\frac{n}{p}} \right) e^{-(t-s)(\lambda_1-\sigma)}ds  \sup_{t>0}e^{\sigma t}\norm{\hat u(t)-u(t)}_{ L^{\frac{p}{2}}(\Omega))}\cr
&\leq& e^{\sigma t}\|e^{t\Delta}(\hat{u}(0)-u(0)) \|_{L^{\frac{p}{2}}(\Omega)} + k_2{k(\gamma)} L(\rho+\tilde{\rho})  \sup_{t>0}e^{\sigma t}\norm{\hat u(t)-u(t)}_{ L^{\frac{p}{2}}(\Omega))},
\end{eqnarray}
where
$$\int_0^t \left( 1+(t-s)^{-\frac{n}{p}} \right) e^{-(t-s)(\lambda_1-\sigma)}ds < \frac{1}{\lambda_1-\sigma} + \frac{1}{(\lambda_1-\sigma)^{1-\frac{n}{p}}}\Gamma\left(  1-\frac{n}{p}\right).$$
Now, assume that the limit \eqref{condition} holds, then we have from \eqref{Gron'} that
\begin{equation}
(1-k_2k(\gamma)L(\rho+\tilde{\rho}))\limsup_{t\to +\infty}e^{\sigma t}\norm{\hat u(t)-u(t)}_{ L^{\frac{p}{2}}(\Omega))}=0.
\end{equation}
If $\rho$ and $\tilde{\rho}$ are small enough, then $1-k_2k(\gamma)L(\rho+\tilde{\rho})>0$ and we get the limit \eqref{exponentialdecay}.

On the other hand, by the same way as \eqref{Gron} and \eqref{Gron} we can estimate that
\begin{eqnarray*}
e^{\sigma t}\|e^{t\Delta}(\hat{u}(0)-u(0)) \|_{L^{\frac{p}{2}}(\Omega)}&\leq&
(1+k_2{k(\gamma)} L(\rho+\tilde{\rho}))  \sup_{t>0}e^{\sigma t}\norm{\hat u(t)-u(t)}_{ L^{\frac{p}{2}}(\Omega))}.
\end{eqnarray*}
Hence, if the limit \eqref{exponentialdecay} holds, then we have clearly that \eqref{condition} holds.
\end{proof}
\begin{remark}
We notice that the limit \eqref{condition} holds if we assume further that $\int_\Omega \hat{u}(0) d\mathrm{Vol}_\Omega = \int_\Omega {u}(0) d\mathrm{Vol}_\Omega =0$. Indeed, by using estimate \eqref{dispersive1} in Lemma \ref{Heatestimates} we have
\begin{equation*}
\|e^{t\Delta}(\hat{u}(0)-u(0)) \|_{L^{\frac{p}{2}}(\Omega)} \leq k_1e^{-\lambda_1 t} \|\hat{u}(0)-u(0) \|_{L^{\frac{p}{2}}(\Omega)}.
\end{equation*} 
This leads to the limit \eqref{condition} for $0<\sigma<\lambda_1$.
\end{remark}

\section{Results on real hyperbolic manifolds}\label{S5} 
In this section we provide the well-posedness and stability of PAP-mild solutions for Keller-Segel (P-E) systems on the real hyperbolic spaces. These results will very like the ones obtained on bounded domains of Euclidean spaces but it is interesting since we work on the whole hyperbolic space. In particular, let $(\mathbb{H}^{n},{\mathfrak g})= (\mathbb{H}^{n}(\mathbb{R}),{\mathfrak g})$ stand for a real hyperbolic manifold, where $n\geqslant 2$ is the dimension, endowed with a Riemannian metric ${\mathfrak g}$. This space is realized via a  hyperboloid in $\mathbb{R}^{n+1}$ by considering the upper sheet
$$
\left\{  (x_{0},x_{1},...,x_{n})\in\mathbb{R}^{n+1};\text{ }\,x_{0}\geq1\text{
and }x_{0}^{2}-x_{1}^{2}-x_{2}^{2}...-x_{n}^{2}=1\,\right\},$$
where the metric is given by $d{\mathfrak g} =-dx_{0}^{2}+dx_{1}^{2}+...+dx_{n}^{2}.$

In geodesic polar coordinates, the hyperbolic manifold $(\mathbb{H}^{n},{\mathfrak g})$ can be described as
$$
\mathbb{H}^{n}=\left\{  (\cosh\tau,\omega\sinh\tau),\,\tau\geq0,\omega
\in\mathbb{S}^{n-1}\right\}
$$
with $d{\mathfrak g}=d\tau^{2} +(\sinh\tau)^{2}d\omega^{2},$ where $d\omega^{2}$ is the
canonical metric on the sphere $\mathbb{S}^{n-1}$. In these coordinates, the
Laplace-Beltrami operator $\Delta_{\mathbb{H}^n}$ on $\mathbb{H}^{n}$ can be
expressed as
$$
\Delta_{\mathbb{H}^n}=\partial_{r}^{2}+(n-1)\coth r\partial
_{r}+\sinh^{-2}r\Delta_{\mathbb{S}^{n-1}}.
$$
It is well known that the spectrum of $-\Delta_{\mathbb{H}^n}$ is the half-line
$\left[\dfrac{(n-1)^2}{4},\infty \right)$. 

The dispersive and smoothing estimates of heat semigroup on hyperbolic space are well
studied in the literature for hyperbolic spaces. For convenience we recall the estimates established by Pierfelice (see \cite[Theorem 4.1 and Corollary 4.3]{Pi}):
\begin{lemma}\label{estimates}
\begin{itemize}
\item[(i)] For $t>0$, and $p$, $q$ such that $1\leq p \leq q \leq \infty$, 
the following dispersive estimate holds: 
\begin{equation}\label{dispersive}
\left\| e^{t {\Delta_{\mathbb{H}^n}}}u_0\right\|_{L^q(\mathbb{H}^n)} \leq [h_n(t)]^{\frac{1}{p}-\frac{1}{q}}e^{-t( \gamma_{p,q})}\left\|u_0\right\|_{L^p(\mathbb{H}^n)} 
\end{equation}
for all $u_0 \in L^p(\mathbb{H}^n,\mathbb{R})$, where 
 $$h_n(t) = \tilde{C}\max\left( \frac{1}{t^{n/2}},1 \right),\, 
   \gamma_{p,q}=\frac{\delta_n}{2}\left[ \left(\frac{1}{p} - \frac{1}{q} \right) + \frac{8}{q}\left( 1 - \frac{1}{p} \right) \right]$$ 
and $\delta_n$ is a positive constant depending only on $n$.  
\item[(ii)] For $t>0$, and $p,q$ such that $1\leqslant p\leqslant q \leqslant\infty$, the following estimate holds:
\begin{equation}
\left\|  {\nabla_x\cdot e^{t\Delta_{\mathbb{H}^n}}} V_0 \right\|_{L^q(\mathbb{H}^n)} \leqslant [h_n(t)]^{\frac{1}{p}-\frac{1}{q}+\frac{1}{n}}e^{-t\left( \frac{\gamma_{q,q}+\gamma_{p,q}}{2} \right)} \left\|V_0\right\|_{L^p(\mathbb{H}^n)}
\end{equation}
for all vector field $V_0 \in L^p(\mathbb{H}^n)$. The functions $h_n(t)$ and $\gamma_{p,q}$ are defined as in Assertion (i).
\end{itemize}
\end{lemma}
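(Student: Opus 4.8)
This is \cite[Theorem 4.1 and Corollary 4.3]{Pi}; we only indicate the route one would follow. The plan is to reduce both assertions to sharp pointwise bounds for the heat kernel of $\mathbb{H}^n$ and then to invoke the rank-one harmonic analysis of this symmetric space, the role that the spectral gap $\lambda_1$ plays in Lemma \ref{Heatestimates} being taken over by $\frac{(n-1)^2}{4}$, the bottom of the spectrum of $-\Delta_{\mathbb{H}^n}$. Since $\mathbb{H}^n$ is a rank-one symmetric space, $e^{t\Delta_{\mathbb{H}^n}}$ acts by convolution against a radial kernel $h_t(x,y)=h_t(d(x,y))$, and the starting point is the Davies--Mandouvalos two-sided bound: with $\rho=\frac{n-1}{2}$,
\begin{equation*}
h_t(r)\;\asymp\; t^{-n/2}\,e^{-\rho^2 t}\,e^{-\rho r}\,e^{-r^2/(4t)}\,(1+r)(1+r+t)^{\frac{n-3}{2}},\qquad t>0,\ r\geq 0.
\end{equation*}
In particular $\|h_t\|_{L^\infty(\mathbb{H}^n)}\lesssim t^{-n/2}e^{-\rho^2 t}$ for small $t$, whereas it stays bounded with exponential decay for large $t$; this is what forces the factor $h_n(t)=\tilde{C}\max(t^{-n/2},1)$ once the spectral decay has been absorbed into the exponent $\gamma_{p,q}$.

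For assertion (i) the plan is to interpolate the endpoint bounds $\|e^{t\Delta_{\mathbb{H}^n}}\|_{L^1\to L^1}\leq 1$, $\|e^{t\Delta_{\mathbb{H}^n}}\|_{L^\infty\to L^\infty}\leq 1$, $\|e^{t\Delta_{\mathbb{H}^n}}\|_{L^1\to L^\infty}\leq\|h_t\|_{L^\infty}$, together with the spectral bound $\|e^{t\Delta_{\mathbb{H}^n}}\|_{L^2\to L^2}\leq e^{-\rho^2 t}$ (the last one because the spherical transform of $h_t$ equals $e^{-t(\lambda^2+\rho^2)}$). A plain Riesz--Thorin interpolation already yields \eqref{dispersive} with the correct power $\frac1p-\frac1q$ of $h_n(t)$, but only with a suboptimal exponential rate; to reach the sharp rate $\gamma_{p,q}$ one must invoke the Kunze--Stein phenomenon on $SO(n,1)$ (equivalently, the sharp decay of the elementary spherical functions), which improves the $L^p\ast L^2\hookrightarrow L^2$ inequality and is responsible for the extra gain $\frac{8}{q}(1-\frac1p)$ in $\gamma_{p,q}$. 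Splitting the relevant time integral over $\{t\leq 1\}$ and $\{t\geq 1\}$ produces the $\max(t^{-n/2},1)$ shape of $h_n$.

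For assertion (ii) the plan is to factorize $\nabla_x e^{t\Delta_{\mathbb{H}^n}}=\big(\nabla_x e^{\frac{t}{2}\Delta_{\mathbb{H}^n}}\big)\circ e^{\frac{t}{2}\Delta_{\mathbb{H}^n}}$, to bound the second factor by (i) with exponents $p\leq q$, and to bound the first one using a Li--Yau type pointwise gradient estimate $|\nabla_x h_t(x,y)|\lesssim (1+t^{-1/2})\,h_{ct}(x,y)$ for a suitable $c>0$; integrating this gives $\|\nabla_x e^{\frac{t}{2}\Delta_{\mathbb{H}^n}}\|_{L^q\to L^q}\lesssim [h_n(t)]^{1/n}e^{-\frac{t}{2}\gamma_{q,q}}$, the exponent $\frac1n$ encoding the half-derivative loss $t^{-1/2}$ converted through the on-diagonal scaling $t^{-n/2}$. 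Composing the two bounds and using $[h_n(t/2)]^a\lesssim[h_n(t)]^a$ then yields \eqref{dispersive} with the power $\frac1p-\frac1q+\frac1n$ of $h_n$ and decay rate $\frac{\gamma_{q,q}+\gamma_{p,q}}{2}$.

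The main obstacle is the sharp exponential rate in (i): elementary interpolation of the heat-kernel bounds is genuinely too weak to reach $\gamma_{p,q}$, so one cannot avoid the rank-one harmonic analysis — spherical-function asymptotics, or equivalently the Kunze--Stein phenomenon — specific to $\mathbb{H}^n$. Once (i) is in hand, (ii) follows cheaply from the semigroup factorization above. The complete details are carried out in \cite{Pi}.
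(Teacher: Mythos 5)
Your proposal takes the same route as the paper: the paper does not prove this lemma at all, but simply recalls it as \cite[Theorem 4.1 and Corollary 4.3]{Pi}, which is exactly the attribution you give, so deferring to Pierfelice is sufficient here. Your additional sketch (Davies--Mandouvalos kernel bounds, interpolation, and the semigroup splitting for the divergence estimate) goes beyond what the paper records and is broadly consistent with how such estimates are proved, though nothing in the paper requires it; note only that since $\delta_n$ is an unspecified positive constant, the stated (non-sharp) rate $\gamma_{p,q}$ does not genuinely force the Kunze--Stein phenomenon as your last paragraph suggests.
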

\begin{remark}
For dispersive estimate \eqref{dispersive} we need not the condition $\int_{\mathbb{H}^n}u_0d\mathrm{Vol}_{\mathbb{H}^n}=0$ as in the case of bounded domain $\Omega\subset \mathbb{R}^n$ (see \eqref{dispersive1}). This leads a slight difference in the stability condition in Theorem \ref{ThecaseHn} below (in comparing with the one obtained in Theorem \ref{thm2.20}).
\end{remark}

We now consider the parabolic-parabolic Keller–Segel system on $\mathbb{H}^n \,\, (n \geqslant 2)$ which is given as follows
\begin{equation}\label{KSH} 
\left\{
  \begin{array}{rll}
u_t \!\! &= \Delta_{\mathbb{H}^n} u - \nabla \cdot (u\nabla v) + \dive f(t) \quad  & (x,t)\in \mathbb{H}^n \times \mathbb{R}, \hfill \cr
- \Delta_{\mathbb{H}^n} v + \gamma v \!\!&=  u \quad & (x,t) \in \mathbb{H}^n \times \mathbb{R},
\end{array}\right.
\end{equation}
By Duhamel’s principle, the mild solution of system \eqref{KSH} is given by
\begin{equation}\label{inter1}
u(t) =  \int_{-\infty}^t \nabla \cdot e^{(t-s)\Delta_{\mathbb{H}^n}}\left[ - u\nabla(-\Delta_{\mathbb{H}^n} + \gamma I)^{-1}u + f\right](s)ds.
\end{equation}
The linear integral corresponding to \eqref{inter1} is
\begin{equation}\label{inter2}
u(t) =  \int_{-\infty}^t \nabla \cdot e^{(t-s)\Delta_{\mathbb{H}^n}}\left[ - \omega\nabla(-\Delta_{\mathbb{H}^n} + \gamma I)^{-1}\omega + f\right](s)ds.
\end{equation}
By similar way as in Sections \eqref{S3} and \eqref{S4}, we can obtain the existence, uniqueness and stability for Keller-Segel (P-E) in the following theorem.
\begin{theorem}\label{ThecaseHn}
Let $n\geq 2$, the following assertions holds
\begin{itemize}
\item[$(i)$] For given functions $\omega\in C_b(\mathbb{R}, L^{\frac{p}{2}}(\mathbb{H}^n))$ and $g \in C_b(\r, L^{\frac{p}{3}}(\mathbb{H}^n))$, there exists a unique bounded solution of the integral equation \eqref{inter2}. Moreover, the following boundedness holds
\begin{equation}
\|u(t)\|_{L^{\frac{p}{2}}(\mathbb{H}^n)} \leq  C\left( k(\gamma) \norm{\omega}^2_{\infty,L^{\frac{p}{2}}(\mathbb{H}^n)} + \norm{f}_{\infty,L^{\frac{p}{3}}(\mathbb{H}^n)} \right).
\end{equation}
\item[$(ii)$] For given functions $(\omega,f) \in PAP(\mathbb{R}, L^{\frac{p}{2}}(\mathbb{H}^n)\times L^{\frac{p}{3}}(\mathbb{H}^n))$, there exists a unique PAP-mild solution of integral equation \eqref{inter2}.
\item[$(iii)$] Assume that ${\max \left\lbrace 3,n\right\rbrace }<p<2n$ and the function $f \in PAP(\mathbb{R}, L^{\frac{p}{3}}(\mathbb{H}^n))$.  If the norm $\|f\|_{\infty,L^{\frac{p}{3}}(\mathbb{H}^n)}$ is sufficiently small,  the system \eqref{KSH} has a unique PAP-mild solution $\tilde{u}$ on a small ball of  
$C_b(\r, L^{\frac{p}{2}}(\mathbb{H}^n))$. Moreover, the solution $\tilde{u}$ is exponential stable in the sense that: for any other mild solution $u$ of \eqref{KSH}, we have
\begin{equation}
\norm{\tilde{u}(t)- u(t)}_{L^{\frac{p}{2}}(\mathbb{H}^n)} \leqslant De^{-\sigma t}\norm{\tilde{u}(0)-u(0)}_{L^{\frac{p}{2}}(\mathbb{H}^n)}
\end{equation}
for all $t>0$, where $\sigma= \min\left\{ \gamma_{p/2,p/2},\,\frac{\gamma_{p/2,p/2}+\gamma_{pn/(4n-p),p/2}}{2},\, \frac{\gamma_{p/2,p/2}+\gamma_{p/3,p/2}}{2}\right\}$.
\end{itemize}
\end{theorem}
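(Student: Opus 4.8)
The plan is to repeat, almost verbatim, the arguments of Theorems \ref{Thm:linear} and \ref{thm2.20}, replacing the Neumann heat semigroup estimates of Lemma \ref{Heatestimates} by the hyperbolic heat semigroup estimates of Lemma \ref{estimates} and invoking Lemma \ref{invertEs} in its hyperbolic form (valid on $\mathbb{H}^n$ by the remark following it). For assertion $(i)$ I would substitute the integrand of \eqref{inter2} into $\|\cdot\|_{L^{p/2}(\mathbb{H}^n)}$, split into the bilinear term and the forcing term, and bound each factor via Lemma \ref{estimates}$(ii)$ together with the continuity $L_j=\partial_j(-\Delta_{\mathbb{H}^n}+\gamma I)^{-1}\colon L^{p/2}(\mathbb{H}^n)\to L^{pn/(2n-p)}(\mathbb{H}^n)$ from Lemma \ref{invertEs} and H\"older's inequality. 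This reduces everything to time integrals $\int_0^{\infty}[h_n(z)]^{\alpha}e^{-z\beta}\,dz$; using $[h_n(z)]^{\alpha}\le\tilde C^{\alpha}(1+z^{-n\alpha/2})$ one recovers exactly the integrals $\int_0^{\infty}(1+z^{-n/p})e^{-z\beta}\,dz$ and $\int_0^{\infty}(1+z^{-(n/2p+1/2)})e^{-z\beta}\,dz$ already handled in the proof of Theorem \ref{Thm:linear}$(i)$, which are finite for $\max\{3,n\}<p<2n$; uniqueness of the bounded solution is immediate, and the bilinear/forcing bound gives the stated inequality.

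For assertion $(ii)$ I would define the solution operator $S(\omega,f)=u$ as in the proof of Theorem \ref{Thm:linear}$(ii)$, decompose $(\omega,f)=(\omega_1,f_1)+(\omega_2,f_2)$ with $(\omega_1,f_1)$ almost periodic and $(\omega_2,f_2)\in PAP_0$, split $S(\omega,f)=S(\omega_1,f_1)+\mathbb{S}$ as in \eqref{seperate}, obtain almost periodicity of $S(\omega_1,f_1)$ from the translation identity \eqref{APestimate} and the bilinear/forcing estimate of part $(i)$, and obtain $\mathbb{S}\in PAP_0$ from the Fubini rearrangement and the Lebesgue dominated convergence argument through the auxiliary functions $\phi_L$, word for word as in Theorem \ref{Thm:linear}$(ii)$, the sole change being the kernel bound. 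For assertion $(iii)$ I would run the contraction of Theorem \ref{thm2.20}: part $(i)$ turns the linear problem into a self-map $\Phi$ of the ball $\B_\rho^{PAP}\subset C_b(\r,L^{p/2}(\mathbb{H}^n))$ once $\rho$ and $\|f\|_{\infty,L^{p/3}(\mathbb{H}^n)}$ are small, and the bilinear estimate gives $\|\Phi(\omega_1)-\Phi(\omega_2)\|_{\infty,L^{p/2}(\mathbb{H}^n)}\le 2\widetilde{K}k(\gamma)\rho\,\|\omega_1-\omega_2\|_{\infty,L^{p/2}(\mathbb{H}^n)}$, a contraction for small $\rho$, whose fixed point $\tilde u$ is the desired PAP-mild solution. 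For stability I would write, for $t>0$, $\tilde u(t)=e^{t\Delta_{\mathbb{H}^n}}\tilde u(0)+\int_0^t\nabla\cdot e^{(t-s)\Delta_{\mathbb{H}^n}}[-\tilde u\nabla(-\Delta_{\mathbb{H}^n}+\gamma I)^{-1}\tilde u+f](s)\,ds$ and likewise for any other mild solution $u$, subtract (the forcing cancels), estimate the difference of the bilinear terms as in \eqref{Gron}, and apply Lemma \ref{estimates}$(i)$ with $p=q=p/2$, which gives $\|e^{t\Delta_{\mathbb{H}^n}}w\|_{L^{p/2}(\mathbb{H}^n)}\le e^{-t\gamma_{p/2,p/2}}\|w\|_{L^{p/2}(\mathbb{H}^n)}$ with no mean-value hypothesis on $w$; this is the content of the remark after Lemma \ref{estimates}, and it is why the conclusion is a clean exponential bound rather than the two-sided statement of Theorem \ref{thm2.20}. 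Multiplying the resulting integral inequality by $e^{\sigma t}$, taking the supremum over $t\in[0,T]$, absorbing the small term $k_2k(\gamma)L(\rho+\tilde\rho)\sup_{[0,T]}e^{\sigma t}\|\tilde u-u\|_{L^{p/2}(\mathbb{H}^n)}$, and letting $T\to\infty$ yields $\|\tilde u(t)-u(t)\|_{L^{p/2}(\mathbb{H}^n)}\le De^{-\sigma t}\|\tilde u(0)-u(0)\|_{L^{p/2}(\mathbb{H}^n)}$.

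The only genuinely new bookkeeping, and the step I expect to be the main obstacle, is that on $\mathbb{H}^n$ the prefactor $[h_n(t)]^{(\cdot)}$ does not decay as $t\to\infty$ (it is constant there), so all decay must be extracted from the exponential factors: one must check that each exponent $\gamma_{p/2,p/2}$, $\gamma_{p/3,p/2}$, $\gamma_{pn/(4n-p),p/2}$, $\gamma_{pn/(2n-p),p/2}$ occurring in Lemma \ref{estimates} is strictly positive and that the $t\to0^{+}$ singularities $t^{-n/p}$ and $t^{-(n/2p+1/2)}$ are integrable. Both hold precisely because $\max\{3,n\}<p<2n$ keeps all source exponents $p/3,\,p/2,\,pn/(4n-p),\,pn/(2n-p)$ strictly between $1$ and $\infty$ (so every $\gamma_{\cdot,\cdot}>0$) and makes the time singularities sub-integrable. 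Finally, $\sigma$ in $(iii)$ is taken to be the slowest of the exponential rates that Lemma \ref{estimates} supplies for the three pieces $e^{t\Delta_{\mathbb{H}^n}}\tilde u(0)$, the bilinear term, and the forcing, which is exactly what makes the exponentially weighted absorption argument close; apart from this the proofs are identical to the Euclidean ones, as anticipated in the introduction.
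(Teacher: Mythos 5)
Your proposal is correct and follows exactly the route the paper takes: the paper's own proof of Theorem \ref{ThecaseHn} is a one-line reduction to Theorems \ref{Thm:linear} and \ref{thm2.20} (with the calculations of \cite[Section 3]{Xu}), replacing Lemma \ref{Heatestimates} by the hyperbolic estimates of Lemma \ref{estimates} and using the hyperbolic version of Lemma \ref{invertEs}, which is precisely what you spell out, including the key observation that Lemma \ref{estimates}$(i)$ with equal exponents needs no mean-zero hypothesis and hence yields the one-sided exponential bound with rate $\sigma$ given by the slowest available decay exponent. Your added bookkeeping on the positivity of the $\gamma_{\cdot,\cdot}$ and the integrability of the $t\to 0^{+}$ singularities under $\max\{3,n\}<p<2n$ is consistent with the paper's intent.
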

\begin{proof}
The proof is similar Theorem \ref{Thm:linear} and Theorem \ref{thm2.20} with the same calucations provided in \cite[Section 3]{Xu}.
\end{proof}

\end{document}